\newtheorem{teor}{\bf Theorem}[section]
\newtheorem{defi}[teor]{\bf Definition}
\newtheorem{lemma}[teor]{\bf Lemma}
\newtheorem{rmk}{\bf Remark}
\newtheorem{defn}{Definition}[section]
\newcommand{\Rn}{{\mathbb R}^{N}}
\newcommand{\Hn}{{\mathbb H}^{N}}
\newcommand{\Bn}{{\mathbb B}^{N}}
\newcommand{\fa} {\forall}
\newcommand{\al} {\alpha}
\newcommand{\ba} {\beta}
\newcommand{\de} {\delta}
\newcommand{\ga} {\gamma}
\newcommand{\Ome} {\Omega}
\newcommand{\De} {\Delta}
\newcommand{\la} {\lambda}
\newcommand{\si} {\sigma}
\newcommand{\Si} {\Sigma}
\newcommand{\no} {\nonumber}
\newcommand{\na} {\nabla}
\newcommand{\va} {\varphi}
\newcommand{\var} {\varepsilon}
\newcommand{\spa}{\vspace{.2in}}
\newcommand{\lan}{\langle}
\newcommand{\ran}{\rangle}
\newcommand{\R}{\mathbb{R}}
\newcommand{\N}{\mathbb{N}}
\newcommand{\Hunb}{H^{1}(\Bn)}
\newcommand{\Ibn}{\int_{\Bn}}
\newcommand{\Il}{I_{\la}}
\newcommand{\wn}{w_{n}}
\newcommand{\um}{u_{m}}
\newcommand{\un}{u_{n}}
\newcommand{\vm}{v_{m}}
\newcommand{\vn}{v_{n}}
\newcommand{\bn}{b_{n}}
\newcommand{\deb}{\rightharpoonup}
\newcommand{\nab}{\nabla_{\Bn}}
\def\H{{I\!\!H}}
\title{Poincar\'{e} Sobolev equations in 
\\
 the Hyperbolic space}
\author{Mousomi Bhakta and K. Sandeep\footnote{TIFR Centre for Applicable Mathematics, Post Bag No. 6503
 Sharadanagar,Chikkabommasandra, Bangalore 560065. Email: mousomi@math.tifrbng.res.in; sandeep@math.tifrbng.res.in}}
\date{}
\begin{document}

\maketitle
\begin{abstract}
We study the a priori estimates,existence/nonexistence of radial 
sign changing solution, and the Palais-Smale characterisation of the problem $-\De_{\Bn}u - \la u = |u|^{p-1}u , u\in H^1(\Bn)$ in the hyperbolic space $\Bn$ where $1<p\leq\frac{N+2}{N-2}$. We will also prove the existence of sign changing solution to the Hardy-Sobolev-Mazya equation and the critical Grushin problem. 
\end{abstract}

\section{Introduction}

In this article we will study compactness properties and the existence/non-existence of sign changing solutions of the problem
\begin{equation}\label{problem}
 -\De_{\Bn}u - \la u = |u|^{p-1}u , u\in H^1(\Bn)
\end{equation}
where $1<p\le \frac{N+2}{N-2}$, $\la<(\frac{N-1}{2})^2$ and $H^1(\Bn)$ denotes the Sobolev space on the disc model of the Hyperbolic space $\Bn$ and $\De_{\Bn}$ denotes the Laplace Beltrami operator on $\Bn$.\\
Apart from its own mathematical interest, the equation \eqref{problem} is closely related to the 
study of Hardy-Sobolev-Mazya type equations and Grushin operators under partial symmetry of their 
solutions(See \cite{CFMS1},\cite{CFMS2},\cite{MS}).\\
First consider the Hardy Sobolev Mazya type equations
\begin{equation}\label{P}
- \Delta u \;-\; \eta \frac{u}{|y|^2}= \frac{|u|^{p_t-1}u}{|y|^t} \;\mbox{ in } \;\R^n,\;  u \in  D^{1,2}(\R^n) 
\end{equation}
where $\R^n =\R^k\times \R^{n-k}, 2\le k< n, 0\le \eta < \frac{(k-2)^2}{4} $ when $k>2, \eta =0$ when 
$k=2, 0\le t< 2 $ and $p_t =\frac{n+2-2t}{n-2}$. A point $x\in \R^n$ is denoted as 
$x=(y,z)\in \R^k\times \R^{n-k}$ .\\
One can see that $ u \in  D^{1,2}(\R^n)$ is a cylindrically symmetric solutions of \eqref{P} 
(i.e., $u(x) = \tilde u(|y|,z)$) iff $v = w\circ M $ solves \eqref{problem} with dimension $N = n-k+1,p=p_t$ and $ \la = \eta + \frac{(n-k)^2-(k-2)^2}{4}$  where 
$w(r,z)= r^{\frac{n-2}{2}}\tilde u(r,z)$ for $(r,z) \in (0,\infty)\times \R^{n-k}$ and 
$M : B^{n-k+1} \rightarrow (0,\infty)\times \R^{n-k}$ is the standard isometry (see (\ref{H-B}) ) 
between the $\Bn$ and the upper half space model of the hyperbolic space (see \cite{CFMS1}, 
\cite{MS} for details.) Note that when $k=2$ and $\eta=0$, then we have $\la = (\frac{N-1}{2})^2$ in \eqref{problem}. But in this case the assumption $u\in H^1_0(\Bn)$ has to be replaced by $u\in \cal{H}$ wich is the completion of $C_c^\infty(\Bn)$ with the norm $\left( \int\limits_{\Bn} \left[ |\nabla_{\Bn}  u|^2  - {\frac{(n-1)^2 }{4}} u^2 \right]  dV_{\Bn}\right)^{\frac12}$,see  \cite{MS} for details.\\\\
The critical Grushin-type equations are given by
\begin{equation}\label{GRU}
\Delta_y \va +(1+\alpha)^2|y|^{2\alpha}\De_z \va + \va^{\frac{Q+2}{Q-2}} = 0 \qquad  \quad (y,z) \in\R^k \times 
\R^h\\
\end{equation} 
where $ \alpha >0,  \:  Q=k+h(1+\alpha) \:  ,  k, h \geq 1$ . The equation (\ref{GRU}) can be considered as the Euler-Lagrange equation satisfied by the extremals of the  weighted 
Sobolev inequality 
\begin{equation}\label{Sobgrucil}
\tilde S \left(\int\limits_{\R^N}|u|^{\frac{2Q}{Q-2}} dydz \right)^{\frac{Q-2}{Q}} \leq
 \int\limits_{\R^N} \left(|\nabla_y u|^2 + (\alpha + 1)^2 |y|^{2\alpha} |\nabla_z u|^2\right) dy dz 
\end{equation} 
Connections between Grushin operators and hyperbolic geometry were observed by Beckner \cite{B}.
 As before $\va$ is a cylindrically symmetric solution of (\ref{GRU}) with 
$ \||u\||^2 := \int\limits_{\R^N} \left(|\nabla_y u|^2 + (\alpha + 1)^2 |y|^{2\alpha} |\nabla_z u|^2\right) dy dz < \infty$,
iff $u= \Phi \circ M$ solves \eqref{problem} with 
$$N= h+1, \qquad \la = \frac 14 \left[ h^2-(\frac{k-2}{\al +1})^2\right] , \qquad p=\frac{Q+2}{Q-2} < 2^\ast-1 $$
where  
 $\Phi(r,z)=r^{\frac{Q-2}{2(1 +\al)}}\, \va(r^{\frac{1}{1+\al}},z), \:  r=|y|$, 
Here again when $\la = (\frac{N-1}{2})^2$ in \eqref{problem} the assumption $u\in H^1_0(\Bn)$ has to be replaced by $u\in \cal{H}$.\\\\
Positive solutions of \eqref{problem} has been extensively studied in \cite{MS}. In fact it was shown in \cite{MS} that \eqref{problem} has a positive solution iff either $1<p<\frac{N+2}{N-2} $ and $\la < \frac{(N-1)^2}{4}$ or $p=\frac{N+2}{N-2} ,\frac{N(N-2)}{4}<\la < \frac{(N-1)^2}{4}$ and $N\ge 4.$ The solutions are also shown to be unique up to isometries (except in $N=2$ where there is a restriction on $p$). \\
In this article we focus on sign changing solutions of \eqref{problem}. The subcritical case is quite different from the critical case where the lack of compactness of the problem comes in to picture. We will present this compactness analysis in Section 3, Theorem \ref{cpt} and Theorem \ref{PS}. In section 4, we will some appriori estimates on the solution. In the fifth section we will prove our main existence results Theorem \ref{K}, Theorem \ref{HSMSC}, Theorem \ref{GRUSC} and Theorem \ref{CP}. Some preliminaries about Hyperbolic space are discussed in the appendix.\\

\section{Priliminaries} 

Let $\Bn:=\{x\in\Rn: |x|<1\}$ denotes the unit disc in $\Rn$. The space $\Bn$ endowed 
with the Riemannian metric $g$ given by $g_{ij}=(\frac{2}{1-|x|^2})^2\de_{ij}$ is called
the ball model of the Hyperbolic space.\\\\ We will denote the associated hyperbolic 
volume by $dV_{\Bn}$ and is given by  $dV_{\Bn}=(\frac{2}{1-|x|^2})^N dx$.
The hyperbolic gradient $\na_{\Bn}$ and the hyperbolic Laplacian $\De_{\Bn}$ are given by
$$
 \na_{\Bn}=(\frac{1-|x|^2}{2})^2\na,\ \ \  \De_{\Bn}=(\frac{1-|x|^2}{2})^2\De+(N-2)\frac{1-|x|^2}{2}<x,\na>
$$
Let $H^1(\Bn)$ denotes the Sobolev space on $\Bn$ with the above metric $g$, then we have
$H^1(\Bn) \hookrightarrow L^p(\Bn)$ for $2\le p\le \frac{2N}{N-2}$ when $N\ge 3$ and
$p\ge 2$ when $N=2$. In fact we have the following Poincar\'{e}-Sobolev inequality (See \cite{MS}) :\\
For every $N \geq 3$ and every $p\in (2, \frac{2N}{N-2}] $
there is an optimal constant \\
$S_{N,p,\la }>0$ such that
\begin{equation}\label{PSE}
 S_{N,p,\la} \left(\int\limits_{\Bn}  |u|^{p} dV_{\Bn} \right)^{\frac{2}{p}} \leq 
\int\limits_{\Bn} \left[ |\nabla_{\Bn}  u|^2  - {\frac{(n-1)^2 }{4}} u^2 \right]  dV_{\Bn} \quad 
\end{equation}
for every $ u\in H^1(\Bn).$  Existence of exremals for \eqref{PSE} and their uniqueness has been studied in \cite{MS}. If $N=2$ any $p> 2$ is allowed (See \cite{AT}, \cite{MS1} for a more precise embedding in this case).\\
Thanks to \eqref{PSE} solutions of \eqref{problem} can be characterised as the critical points of the energy functional $\Il$ given by
\begin{equation}\label{IL}
I_{\la}(u)= \frac{1}{2}\Ibn \big[|\na_{\Bn}u|_{\Bn}^2-\la u^2\big]dV_{\Bn}-\frac{1}{p+1}\Ibn |u|^{p+1}dV_{\Bn} 
\end{equation}
{\bf Conformal change of metric.} Let $f: M \rightarrow N$ be a conformal diffeomorphism between two Riemannian manifolds $(M,g)$ and $(N,h)$ of dimension $N\ge 3,$ i.e., $f^\ast h= \phi ^{\frac{4}{N-2}}g$ for some positive function $\phi.$
Consider the equations
\begin{equation}\label{EM}
 -\Delta_gu + \frac{N-2}{4(N-1)}S_g u= |u|^{\frac{4}{N-2}}u \;\;\;{\text on}\;\;\; M
\end{equation}
\begin{equation}\label{EN}
 -\Delta_hv + \frac{N-2}{4(N-1)}S_h v= |v|^{\frac{4}{N-2}}v \;\;\;{\text on}\;\;\; N
\end{equation}
where $\Delta_g,S_g $ and $\Delta_h,S_h $ are the Laplace Beltrami operators and scalar curvatures
 on $M$ and $N$ respectively. Then if $v$ is a solution of \eqref{EN}, then $u=\phi (v\circ f)$
 is a solution of \eqref{EM}. Moreover $\int\limits_M |u |^{\frac{2N}{N-2}}dV_M = \int\limits_N |v|^{\frac{2N}{N-2}} dV_N$ if one of the integral is finite.

As an easy consequence, if $\tau \in I(\Bn)$ the isometry group of $\Bn$ and $u$ any solution of \eqref{problem} then 
$v=u\circ \tau$ is again a solution of \eqref{problem} and $\Il(u) = \Il(v).$ See the appendix for details about the isometry group $ I(\Bn)$.\\

As another consequence, noting that the hyperbolic metric $g=\phi ^{\frac{4}{N-2}}g_e $ where
 $g_e$ is the Euclidean metric on $\Bn$,  
$\phi = \left(\frac{2}{1-|x|^2}\right)^{\frac{N-2}{2}}$ and the scalar curvature of $g$ is $-N(N-1)$ we see that $u$ is a solution of \eqref{problem} with  $p = \frac{N+2}{N-2}$ iff $v= \left(\frac{2}{1-|x|^2}\right)^{\frac{N-2}{2}}u$ solves the Euclidean equation
\begin{equation}\label{EE}
-\De v - \tilde\la \left(\frac{2}{1-|x|^2}\right)^2v =  |v|^{\frac{4}{N-2}}v, v \in H^1_0(\Bn)
 \end{equation}
where $\tilde \la = (\la-\frac{N(N-2)}{4})$.
Let us denote the energy functional corresponding to \eqref{EE} by
\begin{equation}\label{JL}
J_{\la}(v)= \frac{1}{2}\Ibn \big[|\na v|^2-\tilde \la \left(\frac{2}{1-|x|^2}\right)^2v^2\big]dx-\frac{1}{p+1}\Ibn |v|^{p+1}dx 
\end{equation}
Then for any $u \in H^1(\Bn)$  if $\tilde u$ is defined as  $\tilde u =  \left(\frac{2}{1-|x|^2}\right)^{\frac{N-2}{2}}u$ then  $\Il(u)=J_{\la}(v)$.Moreover
$\lan\Il^\prime(u),v\ran=\lan J_{\la}^\prime(\tilde u), \tilde v\ran$ where $\tilde v$ is defined in the same way.

\section{Compactness and non-compactness}

In this section we will study the compactness properties of \eqref{problem}. Let 
$u \in H^1(\Bn)$ and $b_n \in \Bn$ such that $b_n\rightarrow \infty$ and $\tau_n$ be
the Hyperbolic translations(see Appendix) such that  $\tau_n(0)=b_n$. Define $u_n = u\circ\tau_n $,
then $||u_n|| = ||u||$ but $u_n \rightharpoonup 0$ in $ H^1(\Bn)$. This shows that the
embedding $H^1(\Bn) \hookrightarrow L^p(\Bn)$ is not compact for any $2\le p \le \frac{2N}{N-2}$. Hence the problem \eqref{problem} is non compact even in the subcritical case. Below we will show that we can overcome this problem in the subcritical case by restricting to the radial situation. The critical case is more involved, we will show that the noncompactness can occur through two profiles.
\begin{subsection}{The radial case.} Let $H^1_r(\Bn)$ denotes the subspace
$$H^1_r(\Bn)=\{u \in H^1(\Bn): u  \; is\; radial\}$$
Since the hyperbolic sphere with centre $0\in \Bn$ is also a Euclidean sphere with centre $0\in \Bn$ (See the appendix),$H^1_r(\Bn)$ can also be seen as the subspace consisting of Hyperbolic radial functions. 
\begin{teor}\label{cpt}
 The embedding $H^1_r(\Bn)\hookrightarrow L^p(\Bn)$ for $2<p<2^*$ is compact.
\end{teor}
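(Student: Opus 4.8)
The plan is to reduce the statement to two ingredients: local compactness on hyperbolic balls, which is just Rellich--Kondrachov, and a uniform control of the mass at infinity, which for \emph{radial} functions follows from a Strauss-type pointwise decay estimate. Passing to geodesic polar coordinates, a radial $u$ is a function of $\rho=d(0,x)$ alone, the volume element is $dV_{\Bn}=\omega_{N-1}(\sinh\rho)^{N-1}\,d\rho$, and $|\na_{\Bn}u|^2=|u'(\rho)|^2$, so that $\|u\|_{H^1}^2=\omega_{N-1}\int_0^\infty\big(|u'|^2+u^2\big)(\sinh\rho)^{N-1}\,d\rho$. First I would take a bounded sequence $u_n$ in $H^1_r(\Bn)$ and, using reflexivity, extract a subsequence with $u_n\deb u$ weakly in $H^1_r(\Bn)$.

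The key estimate is the radial decay bound. Writing $u(\rho)^2(\sinh\rho)^{N-1}=-\int_\rho^\infty\frac{d}{ds}\big[u^2(\sinh s)^{N-1}\big]\,ds$ and expanding the derivative, one bounds the integrand by $C\big(|u||u'|+u^2\big)(\sinh s)^{N-1}$ for $s$ bounded away from $0$ (using $\cosh s\le C\sinh s$ there), and Cauchy--Schwarz yields
\[
|u(\rho)|\le C(\sinh\rho)^{-\frac{N-1}{2}}\,\|u\|_{H^1}.
\]
This exponential decay is the radial analogue of Strauss' lemma. From it I would estimate the tail: for $p>2$,
\[
\int_{\Bn\setminus B_R}|u_n|^p\,dV_{\Bn}\le C\|u_n\|^{p-2}(\sinh R)^{-\frac{(N-1)(p-2)}{2}}\int_{\Bn}u_n^2\,dV_{\Bn},
\]
so that, $\|u_n\|$ being bounded and $p>2$, the tails over $\Bn\setminus B_R$ tend to $0$ \emph{uniformly in $n$} as $R\to\infty$. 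This is precisely the step where the radial restriction and the hypothesis $p>2$ enter, and it is what rules out the loss of mass at infinity exhibited before the theorem by the translates $u\circ\tau_n$.

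Finally, on each fixed ball $B_R=\{d(0,x)<R\}$ the hyperbolic and Euclidean metrics are uniformly equivalent (there $1-|x|^2$ is bounded above and below), so $H^1(B_R)$ coincides with the Euclidean Sobolev space and Rellich--Kondrachov gives $u_n\to u$ strongly in $L^p(B_R)$ for every $p<2^*$; this is where the upper restriction $p<2^*$ is used. Combining the two: given $\var>0$, choose $R$ so large that the tails of both $u_n$ and $u$ are below $\var$ uniformly in $n$, then invoke the $L^p(B_R)$-convergence on the remaining ball, to conclude $u_n\to u$ in $L^p(\Bn)$. The main obstacle is establishing the uniform tail control, i.e. the Strauss-type pointwise decay together with its consequence above; once that is in hand the local compactness and the splitting argument are routine.
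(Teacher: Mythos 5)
Your proof is correct and follows essentially the same strategy as the paper: a Strauss-type pointwise decay estimate for radial $H^1$ functions (your bound $|u|\le C(\sinh\rho)^{-\frac{N-1}{2}}\|u\|$ is, after the change of variable $\sinh\rho=\frac{2|x|}{1-|x|^2}$, the same as the paper's $|u(x)|\le C\|u\|\bigl(\frac{1-|x|^2}{2}\bigr)^{\frac{N-1}{2}}|x|^{-N/2}$), combined with Rellich--Kondrachov on a fixed ball and control of the tail, where $p>2$ enters. The only cosmetic difference is that you make the tail smallness uniform in $n$ while the paper invokes dominated convergence with the decay bound as dominating function.
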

\begin{proof}
 Let $u\in H^1_r(\Bn)$ then $u(x)=u(|x|)$, by denoting the radial function by $u$ itself.
Then 
\begin{displaymath}
\omega_{N-1}\int_0^1 u'(s)^2(\frac{2}{1-s^2})^{N-2}s^{N-1}ds = \Ibn |\na u|^2(\frac{2}{1-|x|^2})^{N-2}dx< \infty. 
\end{displaymath}
where $\omega_{N-1}$ is the surface area of $S^{N-1}.$ Thus for $u\in H^1_r(\Bn)$
\begin{eqnarray*}
 u(|x|) &=& -\int_{|x|}^1 u'(s)ds
\\      
       &\leq& \big(\int_0^1 u'(s)^2(\frac{2}{1-s^2})^{N-2}s^{N-1}ds\big)^\frac{1}{2}\big(\int_{|x|}^1 (\frac{1-s^2}{2})^{N-2}s^{-(N-1)}ds\big)^\frac{1}{2}
\\
       &\leq& \omega_{N-1}^{-\frac{1}{2}}||u||_{\Hunb}(\frac{1-|x|^2}{2})^\frac{N-2}{2}\frac{1}{|x|^\frac{N}{2}}\big(\int_{|x|}^1 sds\big)^\frac{1}{2}
\\
       &\leq& \omega_{N-1}^{-\frac{1}{2}}||u||_{\Hunb}(\frac{1-|x|^2}{2})^\frac{N-1}{2}\frac{1}{|x|^\frac{N}{2}}
\end{eqnarray*}
Let $\{u_m\}$ be a bounded sequence in $H^1_r(\Bn)$. Then upto a subsequence we may assume $u_m\rightharpoonup u$ in $H^1_r(\Bn)$ and pointwise . To complete the proof we need to show now $u_m\to u$ in $L^p(\Bn)$. 
\begin{displaymath}
\Ibn |u_m|^p dV_{\Bn}= \int_{|x|\leq \frac12}|u_m|^p dV_{\Bn}+ \int_{|x|> \frac12}|u_m|^p dV_{\Bn}.
\end{displaymath}
The convergence of 1st integral follows from Relich's compactness theorem. The convergence of
 2nd integral follows from the dominated convergence theorem as in $\{|x|>\frac12\}$ we have the 
estimate $ |u_m(x)|^p \le C\big(\frac{1-|x|^2}{2}\big)^{\frac{N-1}{2}p}$ and  
$$\int_{|x|>\frac12}\big(\frac{1-|x|^2}{2}\big)^{\frac{N-1}{2}p}dV_{\Bn} \le \int_{|x|>\frac12}\big(\frac{1-|x|^2}{2}\big)^{\frac{N-1}{2}p-N}dx <\infty$$
provided $p>2.$
This completes the proof. 
\end{proof}
 
But the above theorem fails for $p=2$ and $2^*$. 
\end{subsection}

\begin{subsection}{Palais Smale Characterisation} In this section we study the Palais Smale sequences of the problem
\begin{equation}{\label{A}}
\left.\begin{array}{rlllll}
-\De_{\Bn}u-\la u &=& |u|^{p-1}u\ & \mbox {in}& \Bn\\
 u &\in& H^1(\Bn) 
\end{array}\right\}
\end{equation}

where $0\le\la<(\frac{N-1}{2})^2$ and $1<p\le 2^\ast-1=\frac{N+2}{N-2}$. To be precise define the associated energy functional $I_{\la}$ as
\begin{eqnarray}
I_{\la}(u)= \frac{1}{2}\Ibn \big[|\na_{\Bn}u|_{\Bn}^2-\la u^2\big]dV_{\Bn}-\frac{1}{p+1}\Ibn |u|^{p+1}dV_{\Bn} 
\end{eqnarray}
We say a sequence $\un\in H^1(\Bn)$ is a Palais Smale sequence ( PS sequence) for $I_{\la}$ 
at a level $d$ if $I_{\la}(\un)\to d$ and $I_{\la}^{'}(\un)\to 0$ in $H^{-1}(\Bn)$.
One can easily see that PS sequences are bounded. Therefore if we restrict $I_{\la} $ to $H^1_r(\Bn)$ and $p< \frac{N+2}{N-2}$ then it follows from Theorem \ref{cpt} that every PS sequence has a convergent subsequence. This is not the case if we relax either one of the above conditions as we will see below. In this section we will analyse this lack of compactness of PS sequences.\\
First observe that the equation \eqref{A} is invariant under isometries. i.e., if $u$ is a solution of \eqref{A} and $\tau \in I(\Bn)$, then $v= u\circ \tau $ is also a solution of \eqref{A}. Thus for a solution $U$ of \eqref{A}, if we define 
\begin{equation}{\label{B}}
 u_n=U\circ \tau_n
 \end{equation}
where $\tau_n \in I(\Bn)$ with $\tau_n(0) \rightarrow \infty$, then $u_n$ is a PS sequence converging weakly to zero. We will see that in the subcritical case noncompact PS sequences are made of finitely many sequnces of type \eqref{B}.\\
However in the critical case $p=2^\ast-1$ we can exhibit another PS sequence coming from the concentration phenomenon.\\
Let $V$ be a solution of the equation
\begin{equation}{\label{C1}}
 -\Delta V \;=\; |V|^{2^\ast-2}V\;,\;\; V\in D^{1,2}(\Rn) 
\end{equation}
The associated energy $J(V)$ is given by \begin{equation}{\label{EEn}}J(V)=\frac{1}{2}\int_{\Rn}|\na V|^2 dx-\frac{1}{2^{*}}\int_{\Rn}|V|^{2^{*}}dx
\end{equation}
Fix $x_0 \in \Bn$ and $\phi \in C_c^\infty (\Bn)$ such that $0\le \phi \le 1$ and $\phi = 1$
in a neighborhood of $x_0$. Define 
\begin{equation}{\label{C}}
v_n = \left(\frac{1-|x|^2}{2}\right)^{\frac{N-2}{2}}\phi(x)  \epsilon_n^{\frac{2-N}{2}}V((x-x_0)/\epsilon_n)
\end{equation}
where $\epsilon_n >0$ and $\epsilon_n \rightarrow 0$, then direct calculation shows that $v_n$ is also a PS sequence. Moreover we have
\begin{lemma} Let $u_n$ be a PS sequence of \eqref{A}, and $\tau_n \in I(\Bn)$ then $v_n := u_n\circ \tau_n$ is also a PS sequence of \eqref{A}.
\end{lemma}

Thus if $\tau_n \in I(\Bn)$ and $v_n$ as in \eqref{C} then $u_n= v_n\circ \tau_n$ is also a PS sequence. We show that any PS sequence is essentially a superposition of the above type of PS sequences.\\
\begin{teor}\label{PS}
 Let ${\un}\in H^1(\Bn)$ be a PS sequence of $I_{\la}$ at a level $d\geq 0$. Then $\exists 
n_1, n_2\in \N$ and functions $u_n^j\in H^1(\Bn)$, $0\leq j\leq n_1$, $v_n^k\in H^1(\Bn)$, 
$0\leq k\leq n_2$ and $u\in H^1(\Bn)$ s.t upto a subsequence
\begin{displaymath}
 \un=u+\sum_{j=1}^{n_1}u_n^j+\sum_{k=1}^{n_2}v_n^k+o(1) 
\end{displaymath}
where $I_{\la}^{'}(u)=0$,  $u_n^j$, $v_n^k$ are PS sequences of the form (\ref{B})
and (\ref{C}) respectively and $o(1)\to 0$ in $\Hunb$. Moreover
\begin{displaymath}
 d= I_{\la}(u)+ \sum_{j=1}^{n_1}I_{\la}(U_j) + \sum_{k=1}^{n_2}J(V_k)+o(1)
\end{displaymath}
where $U_j,V_k$ are the solutions of \eqref{A} and \eqref{C1} corresponding to $u_n^j$,and  $v_n^k$.
\end{teor}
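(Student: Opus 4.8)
The plan is to adapt the now-classical profile-decomposition (concentration-compactness) machinery of Struwe and Brezis–Coron to the hyperbolic setting, combining two mechanisms of non-compactness: the "escaping to infinity" caused by the non-compact isometry group (profiles of type \eqref{B}) and, in the critical case $p=2^\ast-1$, the Euclidean bubbling (profiles of type \eqref{C}). Since any PS sequence $\un$ is bounded, I would first pass to a subsequence with $\un \deb u$ weakly in $\Hunb$, observe $\Il'(u)=0$ by weak continuity of the derivative against test functions, and set $w_n^{(1)} := \un - u$, which is then a PS sequence at level $d - \Il(u)$ converging weakly to zero.

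\textbf{The iterative extraction.} The heart of the argument is an inductive peeling-off procedure applied to a PS sequence $w_n$ with $w_n \deb 0$. I would argue: if $w_n \to 0$ strongly in $L^{p+1}$ we are done (the remainder vanishes); otherwise $\limsup \|w_n\|_{L^{p+1}}>0$, and I must locate a profile. Here the two cases genuinely diverge. In the \emph{subcritical} case $p<2^\ast-1$, the only source of non-compactness is the group action, so I would use a concentration-function / vanishing-dichotomy argument to produce isometries $\tau_n$ with $\tau_n(0)\to\infty$ such that $w_n\circ\tau_n^{-1} \deb U \neq 0$, with $U$ solving \eqref{A}; subtracting $U\circ\tau_n$ (a profile of type \eqref{B}) yields a new PS sequence with strictly smaller energy. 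In the \emph{critical} case I must additionally detect whether the mass concentrates at a finite scale: rescaling by the concentration parameter $\epsilon_n\to 0$ around a point, the conformal factor $\big(\tfrac{1-|x|^2}{2}\big)^{\frac{N-2}{2}}$ flattens to a constant and the limit equation degenerates to the flat problem \eqref{C1}, producing a bubble $V$ and a profile of type \eqref{C}.

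\textbf{Energy quantization and termination.} After subtracting each profile I would invoke the Brezis–Lieb lemma (and its gradient analogue via the conformal change to $J_\la$ in \eqref{JL}, so that the hyperbolic energy splits cleanly into a Euclidean part plus the weighted lower-order term) to get the asymptotic orthogonality $\Il(w_n) = \Il(U\circ\tau_n) + \Il(\text{new remainder}) + o(1)$, and correspondingly $\|w_n\|^2 = \|U\|^2 + \|\text{remainder}\|^2 + o(1)$. Because each extracted $U_j$ (resp.\ $V_k$) is a nontrivial solution, its energy is bounded below by a fixed positive constant — the ground-state energy of \eqref{A} or the Aubin–Talenti level $J(V)\ge \frac{1}{N}S^{N/2}$ — so the total energy $d$ bounds the number of profiles; the process terminates after finitely many steps, giving $n_1,n_2<\infty$ and the stated sum formula.

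\textbf{The main obstacle} will be the case analysis that separates the two profile types, i.e.\ rigorously showing that a non-vanishing PS sequence with $w_n\deb 0$ must, after rescaling, converge to a solution of \emph{either} \eqref{A} \emph{or} \eqref{C1} and nothing else. Concretely, one needs a dichotomy on the concentration scale: either the mass stays at unit hyperbolic scale (and then only the group translations $\tau_n(0)\to\infty$ can carry it off, giving \eqref{B}), or it concentrates at a vanishing Euclidean scale $\epsilon_n\to 0$ (forcing the blow-up analysis that degenerates \eqref{A} into \eqref{C1}, giving \eqref{C}). Controlling the interaction terms between profiles living at different scales and different centres — and verifying that after passing to the rescaled frame the weighted potential term $\tilde\la\big(\tfrac{2}{1-|x|^2}\big)^2$ in \eqref{JL} contributes negligibly in the bubbling regime — is the delicate part; everything else is the standard bookkeeping of the Struwe decomposition transplanted through the isometry-invariance already recorded in this section.
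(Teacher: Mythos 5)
Your overall architecture --- subtract the weak limit, iteratively peel off profiles of the two types, use Brezis--Lieb for the energy splitting, and terminate because each profile costs a fixed quantum of energy --- is exactly the paper's. But the step you label ``the main obstacle'' is not a technical nuisance to be deferred: it is the one genuinely new idea of the proof, and your proposal does not contain it. In the Euclidean critical problem one defeats vanishing by dilating so that a fixed ball captures a fixed amount of mass; in $\Bn$ the conformal group coincides with the isometry group, so no such renormalization is available, and your assertion that one can ``produce isometries $\tau_n$ with $w_n\circ\tau_n^{-1}\rightharpoonup U\neq 0$'' is precisely what has to be proved. The paper's device is a concentration function at infinity: for $r>0$ and $a\in S_r=\{x:|x|^2=1+r^2\}$ it considers the regions $A(a,r)=B(a,r)\cap\Bn$, cut out by spheres orthogonal to $S^{N-1}$, sets $Q_n(r)=\sup_{a\in S_r}\int_{A(a,r)}|u_n|^{p+1}\,dV_{\Bn}$, and chooses $R_n$ and $x_n$ with $\int_{A(x_n,R_n)}|u_n|^{p+1}\,dV_{\Bn}=\delta$ for a fixed $\delta$ below a threshold determined by the sharp constant $S_{\la,N,p}$. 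The crucial geometric input (Lemma \ref{tran}) is that $I(\Bn)$ acts transitively on the family $\{A(a,r)\}$, so one can pull $A(x_n,R_n)$ back to a fixed reference region $A(x_0,\sqrt 3)$; the renormalized sequence $v_n=u_n\circ T_n$ then has mass exactly $\delta$ there and at most $\delta$ in every other $A(a,\sqrt 3)$. If its weak limit vanishes, a cutoff test-function argument combined with the Poincar\'e--Sobolev inequality and the smallness of $\delta$ forces all the $L^{p+1}$ mass into a compact subset of $\Bn$ --- a contradiction in the subcritical case, and the gateway to the bubbles in the critical case. Without this construction (or an equivalent substitute) your extraction of the type-(\ref{B}) profiles does not get off the ground.

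A second, smaller divergence: for the critical bubbles you propose a direct blow-up analysis at scale $\epsilon_n$, checking that the conformal factor flattens and that the potential term is negligible in the rescaled frame. The paper avoids this entirely: once the mass of the renormalized sequence is trapped in $\{|x|\le R\}$ with $R<1$, it multiplies by a cutoff, performs the conformal change to the Euclidean metric, and invokes the known Struwe decomposition for $-\Delta w=a(x)w+|w|^{2^\ast-2}w$ on a bounded domain, transporting the resulting bubbles back to $\Bn$. Your route is workable but duplicates work that can be quoted; the interaction estimates between profiles at different scales that you flag are absorbed into that cited Euclidean result.
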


Classifiacation of PS sequences has been done for various problems in bounded domains in $\Rn$
 and on compact Riemannian manifolds, where the lack of compactness is due to the concentration 
phenomenon (See \cite{ST},\cite{SU},\cite{BS},... and the references therein). However the present case should be compared
 with the case of infinite volume case, say the critical equations in $\Rn$.
In this case lack of compactness can occur through vanishing of the mass (in the sense of the concentration compactness of Lions). However in the Euclidean case by dialating a given sequence we can assume that all the functions involved has a fixed positive mass in a given ball and hence we can overcome the vanishing of the mass. However in the case of $\Bn$ this is not possible as the conformal group of $\Bn$ is the same as the isometry group. We will overcome this difficulty by doing a concentration function type argyment near infinity. For this purpose let us define
\begin{defi}For $r>0$, define
$S_r := \{x \in \R^n \;:\; |x|^2=1+r^2 \} $
and for $a \in S_r$ define $$A(a,r)=B(a,r)\cap \Bn$$ where $B(a,r)$ is the open ball in the Euclidean space with center $a$ and radius $r>0.$ 
\end{defi}
Note that for the above choice of $a$ and $r$, $\partial B(a,r)$ is orthogonal to $S^{N-1}.$ We also have,
\begin{lemma} \label{tran} Let $r_1>0, r_2>0$ and $A(a_i,r_i), i=1,2$ be as in the above definition,
 then there exists $\tau \in I(\Bn)$ such that $\tau(A(a_1,r_1)) = A(a_2,r_2)$.
\end{lemma}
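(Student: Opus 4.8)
The plan is to exploit the fact, implicit in the remark preceding the statement, that each region $A(a,r)$ is a hyperbolic half-space. Since $\partial B(a,r)$ meets $S^{N-1}$ orthogonally whenever $a\in S_r$, the spherical cap $\partial B(a,r)\cap\Bn$ is a totally geodesic hypersurface of $\Bn$ (in the ball model the totally geodesic hypersurfaces are exactly the pieces of spheres orthogonal to $S^{N-1}$, together with the diameters), and $A(a,r)$ is one of the two components of $\Bn$ that it bounds. Thus the lemma reduces to the statement that $I(\Bn)$ acts transitively on hyperbolic half-spaces, and I would produce the required $\tau$ as a composition of a rotation with a hyperbolic translation along a fixed diameter.

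First I would use a rotation $R\in O(N)$, which is an isometry of $\Bn$ fixing $0$, to align the two configurations. Choosing $R$ with $R(a_1/|a_1|)=a_2/|a_2|=:e$ and using $R(B(a_1,r_1))=B(Ra_1,r_1)$ together with $R(\Bn)=\Bn$, one gets $R(A(a_1,r_1))=A(Ra_1,r_1)$ with $Ra_1=|a_1|e=\sqrt{1+r_1^2}\,e\in S_{r_1}$. Hence it suffices to treat the case $a_1=\sqrt{1+r_1^2}\,e$ and $a_2=\sqrt{1+r_2^2}\,e$ with a common direction $e$. Next I would observe that, since the line $\R e$ passes through the centre $a_i=|a_i|e$ of the sphere $\partial B(a_i,r_i)$, it meets that sphere orthogonally; as the hyperbolic metric is conformal to the Euclidean one, the hyperplane $H_i:=\partial B(a_i,r_i)\cap\Bn$ is hyperbolically orthogonal to the geodesic $\gamma:=\R e\cap\Bn$, crossing it at the single interior point $p_i=(\sqrt{1+r_i^2}-r_i)\,e$, and $A(a_i,r_i)$ is precisely the component of $\Bn\setminus H_i$ containing the boundary point $e$.

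Finally I would invoke the one-parameter group of hyperbolic translations along $\gamma$, the isometries fixing the two endpoints $\pm e$ of $\gamma$ on $S^{N-1}$ (see the appendix). Such a translation maps $\gamma$ onto itself preserving its orientation and carries the hyperplane orthogonal to $\gamma$ at a point $p$ to the hyperplane orthogonal to $\gamma$ at its image; since these translations act transitively on $\gamma$, I would pick the translation $T$ with $T(p_1)=p_2$, so that $T(H_1)=H_2$. Because $T$ fixes $e$, it sends the $e$-component to the $e$-component, giving $T(A(a_1,r_1))=A(a_2,r_2)$, whence $\tau:=T\circ R$ is the desired isometry. The main obstacle, and the only point requiring genuine care, is the geometric claim used in this last step: that a translation along $\gamma$ permutes the hyperplanes orthogonal to $\gamma$ transitively while preserving the distinguished side. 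I would justify it either abstractly (isometries preserve geodesics, orthogonality and totally geodesic hypersurfaces, and $T$ fixes $\gamma$ setwise with a fixed orientation and fixes $e$), or by writing the translation explicitly as a M\"obius transformation of $\Bn$ along the axis $\R e$ and solving $T(p_1)=p_2$ for its parameter, which simultaneously confirms $T\in I(\Bn)$.
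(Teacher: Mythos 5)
Your proof is correct, but it follows a genuinely different route from the paper's. The authors conjugate by the standard isometry $M:\Bn\to\Hn$ to the upper half-space model: since $M$ extends to a conformal map of $\hat{\R^N}$, it sends the sphere $\partial B(a_i,r_i)$ (orthogonal to $S^{N-1}$) to a sphere orthogonal to $\R^{N-1}\times\{0\}$, so $M(A(a_i,r_i))=\R^N_+\cap B(\tilde a_i,\tilde r_i)$ with $\tilde a_i$ on the boundary hyperplane, and the single similarity $\tilde T(x)=\tilde a_2+\frac{\tilde r_2}{\tilde r_1}(x-\tilde a_1)$ --- a horizontal translation composed with a dilation, both isometries of $\Hn$ --- carries one half-ball to the other; then $\tau=M^{-1}\circ\tilde T\circ M$. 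You instead stay in the ball model and realize $\tau$ as a rotation followed by a hyperbolic translation along the common axis $\R e$, identifying $A(a,r)$ as a half-space cut off by a totally geodesic hypersurface orthogonal to that axis. Both arguments are sound. The paper's version is shorter because the dilation in $\Hn$ absorbs the change of radius in one explicit formula, with no appeal to the classification of totally geodesic hypersurfaces or to uniqueness of the orthogonal hypersurface through a point (the one step you rightly flag as needing care, and which you can indeed justify either synthetically or via the explicit formula for $\tau_b$ in the appendix). Your version buys a more conceptual picture --- $A(a,r)$ is literally a hyperbolic half-space and $I(\Bn)$ acts transitively on half-spaces --- which is the geometric fact the authors implicitly use later when they note that for large $r$ the sets $A(x,r)$ approximate intersections of $\Bn$ with half-spaces.
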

\begin{proof} Let $M : \Bn \rightarrow \Hn$ be the standard isometry between the ball model 
$\Bn$ and the upper half space model $\Hn$. Since $M$ is the restriction to $\Bn$ of a 
conformal map of the extended Euclidean space $\Rn \cup \{\infty\}$, we see that the sphere 
$S(a,r)$ is orthogonal to $S^{N-1}$ iff $M (S(a,r))$ is a sphere in $\R^N$ orthogonal to 
$\R^{N-1} \times \{0\}$. Thus $M (A(a_i,r_i))$ is of the form $\R^N_+ \cap B(\tilde a_i,\tilde r_i)$.
 Now the map $\tilde T(x) = \tilde a_2 + \frac{\tilde r_2}{\tilde r_1}(x-\tilde a_1) $ is an 
isometry in $\Hn$ and maps ${\R^N_+ \cap B(\tilde a_1,\tilde r_1)}$ on to ${\R^N_+ \cap B(\tilde a_2,\tilde r_2)}$.
 Hence $T = M^{-1}\circ \tilde T(x)\circ  M$ is in $I(\Bn)$ and maps $A(a_1,r_1)$ to $A(a_2,r_2)$.
 
\end{proof}

{\it Proof of Theorem \ref{PS}.} From standard arguments it follows that any PS sequence is 
bounded in $\Hunb$ .  More precisely $I_{\la}(\un)=d+o(1)$
and $\big<I^{'}_{\la}(\un), \un\big>= o(1)||\un||$,  computing $I_{\la}(\un)-\frac{1}{p+1}\big<I^{'}_{\la}(\un), \un\big>$
we get $$||\un||_{\Hunb}^2\leq C+ o(1)||\un||_{\Hunb}$$ and hence boundedness follows. Thus up to a subsequence we may assume $u_n \rightharpoonup u$ in $\Hunb$.\\\\

{\it Step 1.} In this step we will prove the theorem when $u=0$. \\
{\it Proof.} Since  s$\un$ is a PS sequence we have
$$\Ibn\big[|\na_{\Bn}\un|_{\Bn}^2-\la \un^2\big]dV_{\Bn} = \Ibn |\un|^{p+1}dV_{\Bn}+o(1)$$
Since the square root of LHS is an equivalent norm in $H^1(\Bn)$ and $\un$ does not converge 
strongly to zero we get $$\liminf\limits_{n\to\infty}\Ibn |\un|^{p+1}dV_{\Bn}>\de'>0.$$
Let us fix $\de >0$ such that  $0<2\de<\de'<S_{\la}^\frac{p+1}{p-1}$. Let us define the 
concentration function $Q_n:(0, \infty)\to \R$ as follows.
\begin{displaymath}
 Q_n(r)=\sup_{x\in S_r}\int_{A(x,r)}|\un|^{p+1}dV_{\Bn}.
\end{displaymath}
Now $\displaystyle\lim_{r\to 0} Q_n(r)=0$,and $\displaystyle\lim_{r\to \infty} Q_n(r)>\de$ as for large $r, A(x,r) $ approximates the intersection of $\Bn$ with a half space $\{y\in \Rn\;:\; y\cdot x > 0 \}$.  Therefore 
we can choose a sequence $R_n>0$ and $x_n\in S_{R_n}$ s.t 
\begin{displaymath}
\sup_{x\in S_{R_n}}\int_{A(x,R_n)}|\un|^{p+1}dV_{\Bn}=\int_{A(x_n,R_n)}|\un|^{p+1}dV_{\Bn}=\de.
\end{displaymath}
Fix $x_0 \in S_{\sqrt{3}}$ and using Lemma \ref{tran} choose  $T_n \in I(\Bn)$ such that $A(x_n, R_n)= T_n(A(x_0, \sqrt{3}))$. Now define
\begin{displaymath}
 \vn(x)= \un\circ T_n(x)
\end{displaymath}
Since $ T_n$ is an isometry one can easily see that $\{\vn\}$ is a PS sequence of $\Il$ at the same level as $\un$ and
\begin{equation}\label{mass}
 \int_{A(x_0, \sqrt{3})}|\vn|^{p+1}dV_{\Bn}= \int_{A(x_n,R_n)}|\un|^{p+1}dV_{\Bn}=\de=\sup_{x\in S_{\sqrt{3}}}\int_{A(x, \sqrt{3})}|
\vn|^{p+1}dV_{\Bn}
\end{equation}
and $||\vn||_{H_\la}=||\un||_{H_\la}$. Therefore upto a subsequence we may assume $\vn\deb v$ in $\Hunb$,
$\vn\to  v$ in $L^q_{\rm loc}(\Bn),\ \ 2<q<2^*$ and pointwise. Moreover $v$ solves the equation \eqref{A}. Let us consider the two cases:\\

{\it Case 1:} $ v=0$\\
First we will claim that\\
Claim: For any  $1>r > 2-\sqrt{3}$
\begin{displaymath}
 \int_{\Bn \cap \{|x| \ge r \}}| \vn|^{p+1}dV_{\Bn}=o(1)
\end{displaymath}
To do this let us fix a point $a\in S_{\sqrt 3}$. Let $\phi\in C_c^{\infty}(A(a, \sqrt{3}))$ such that $0\leq\phi\leq 1$ where $A(a,\sqrt{3})=B(a,\sqrt{3})\cap\Bn$ $B(a, \sqrt{3})$ is the Euclidean ball with center $a$ and radius $\sqrt{3}.$ Now 
\begin{displaymath}
< \vn, \psi>_{H_\la}=\Ibn|\vn|^{p-1}\vn\psi dV_{\Bn}+o(1)||\psi||
\end{displaymath}
for every $\psi\in H^1(\Bn)$. Now putting $\psi=\phi^2\vn$ in the above identity we get
\begin{displaymath}
 < \vn, \phi^2 \vn>_{H_\la}=\Ibn| \vn|^{p-1}(\phi\vn)^2 dV_{\Bn}+o(1)
\end{displaymath}
A simple computation gives
\begin{eqnarray*}
 < \vn, \phi^2\vn>_{H_\la}&=&\Ibn \big[|\na_{\Bn}(\phi \vn)|^2_{\Bn}-(\frac{1-|x|^2}{2})^2 \vn^2|\na\phi|^2-\la(\phi \vn)^2\big]dV_{\Bn}
\\
&=& ||\phi\vn||^2_{H_\la}-\int_{\mbox{supp}\  \phi}(\frac{1-|x|^2}{2})^2 \vn^2|\na\phi|^2 dV_{\Bn}
\\
&=& ||\phi \vn||^2_{H_\la}+o(1)
\end{eqnarray*} Thus
\begin{equation}\label{F}
||\phi \vn||_{H_\la}^2= \Ibn| \vn|^{p-1}(\phi \vn)^2 dV_{\Bn}+o(1)
\end{equation}
Now using (\ref{F}), Cauchy-Schwartz and the Poincar\'{e}-Sobolev inequality (\ref{PSE}) we get
\begin{displaymath}
 S_{\la,N,p}(\Ibn|\phi \vn|^{p+1}dV_{\Bn})^\frac{2}{p+1}\leq (\Ibn|\phi \vn|^{p+1}dV_{\Bn})^\frac{2}{p+1}(\int_{A(a,\sqrt{3})}| \vn|^{p+1}dV_{\Bn})^\frac{p-1}{p+1}
\end{displaymath}
Now if $\Ibn|\phi \vn|^{p+1}dV_{\Bn}\not\to 0$ as $n\to\infty$ we get
\begin{displaymath}
 \de^\frac{p-1}{p+1}<S_{\la,N, p}<(\int_{A(a,\sqrt{3})}| \vn|^{p+1}dV_{\Bn})^\frac{p-1}{p+1}<\de^\frac{p-1}{p+1}
\end{displaymath}
which is a contradiction. This implies 
\begin{equation}
\label{I}\Ibn|\phi  \vn|^{p+1}dV_{\Bn}\to 0.
\end{equation} 
Since $a\in S_{\sqrt 3}$ is arbitrary, the claim follws.\\\\
If $1<p<2^\ast-1$ this together with the fact that $v_n \rightarrow 0$ in $L^{p+1}_{loc}(\Bn)$ immediately gives a contradiction to \eqref{mass}.Thus let us assume $p=2^\ast-1$.\\ 
Fix $2-\sqrt {3} < r <R <1$ and choose $\theta \in C_c^\infty(\Bn)$ such that $0\le \theta \le 1,\theta(x) =1 $ for $|x|<r$ and $ \theta(x) =0 $ for $|x|>R.$ \\
Define $ \overline v_n = \theta \vn$, then the above claim shows that $\overline v_n$  is 
again a PS sequence and $ \vn = \overline v_n  +o(1)$ where $o(1)\rightarrow 0$ in $H^1(\Bn).$
 Let us consider a conformal change of the metric, from the hyperbolic to the Eucledean metric.
 Define $\tilde\vn = \left(\frac{2}{1-|x|^2}\right)^{\frac{N-2}{2}}\overline v_n $ then $\tilde\vn \in H^1_0(B(0,R))$ and is a PS sequence for the problem
\begin{equation}\label{CA}
-\Delta w  = a(x)w + |w|^{2^\ast-2}w,\;\; w \in H^1_0(B(0,R))
\end{equation}
where $a$ is a smooth bounded function in $B(0,R) $ given by $a(x) = \frac{4\la - N(N-2)}{(1-|x|^2)^2}.$ Now it follows from the PS sequence characterization of \eqref{CA}
(see for example \cite{ST}) that  
$$ \tilde\vn = \sum_{k=1}^{n_2}w_n^k+o(1)$$
where $w_n^k$ is of the form
$$w_n^k(x) = \phi(x)  \epsilon_n^{\frac{2-N}{2}}V_k((x-x_n^k)/\epsilon_n)$$
where $\epsilon_n >0, |x_n^k| \le 2-\sqrt 3,\epsilon_n \rightarrow 0, x_n^k \rightarrow x^k $ 
as $n\rightarrow \infty$, $V_k$'s are solutions of \eqref{C1} and $\phi \in C_c^\infty(\Bn)$ 
such that $0\le \phi \le 1,\phi(x) =1 $ for $|x|<r$ and $ \phi(x) =0 $ for $|x|>R.$ Moreover 
the associated energy $J_\la (\tilde\vn)$ is given by
$$J_\la (\tilde\vn) = \sum_{k=1}^{n_2}J(V_k)+o(1)$$
where $J_\la$ and $J$ are as in \eqref{JL} and \eqref{EEn}.
Thus $$ \vn = \overline v_n  +o(1) = \left(\frac{2}{1-|x|^2}\right)^{-\frac{N-2}{2}}\tilde\vn +o(1) = \sum_{k=1}^{n_2}v_n^k+o(1)$$
where $v_n^k = \left(\frac{2}{1-|x|^2}\right)^{-\frac{N-2}{2}}w_n^k $.\\
Hence the theorem follows in this case.\\

{\it Case 2:} $ v\not=0$\\
Define,$\wn(x)=v\circ T_n^{-1}(x) $ 
Since $v$ solves \eqref{A}, $\wn$ is a PS sequence. Moreover since $\un\deb 0$ in $\Hunb$, we have $T_n^{-1}(0)\to \infty$ and hence $\wn$ is a PS sequence of the form \eqref{B}. We claim that\\
{\it Claim :}  $\un -\wn$ is a PS sequence of $I_\la$ at level $d-I_\la(v).$\\
Since $\vn \rightharpoonup v$ we have 
$||\na_{\Bn}\vn||_2^2= ||\na_{\Bn} v||_2^2 +  ||\na_{\Bn}(\vn- v)||_2^2 +o(1)$
Also from the Brezis Lieb Lemma, we have $ ||\vn||_2^2= || v||_2^2 +  ||(\vn- v)||_2^2 +o(1)$ and $ ||\vn||_{p+1}^{p+1}= || v||_ {p+1}^{p+1}+  ||(\vn- v)||_ {p+1}^{p+1}+o(1)$. Combining these facts with the invariance of $\Il$ under the action of $I(\Bn)$,  we get
$$\Il(\un-\wn)=\Il(\vn - v ) = \Il(\vn) -\Il( v) + o(1)= \Il(\un) -\Il(\wn) + o(1) $$

Next we show that $\un-\wn$ is a PS sequence of $\Il$.\\
First note that
$\Il'(\un-\wn)(\phi)=\Il'(\vn-v)(\phi_n) $
where $\phi_n = \phi \circ T_n$, $||\phi_n||_{\Hunb}=||\phi||_{\Hunb}$. \\
Now to show $|\Il'(\vn-v)(\phi_n)|=o(||\phi_n||)$ we need to basically prove
\begin{equation}\label{E}
\Ibn\big(|\vn|^{p-1}\vn-|v|^{p-1}v-|\vn-v|^{p-1}(\vn-v)\big)\phi_n dV_{\Bn}=o(||\phi_n||)
\end{equation}
because the linear part follows easily. 
Using the H\"{o}lder inequaliy the L.H.S of (\ref{E}) can be estimated by 
\begin{displaymath}
 |\phi_n|_{L^{2^*}(\Bn)}\left[\Ibn\big|(|\vn|^{p-1}\vn-|v|^{p-1}v-|\vn-v|^{p-1}(\vn-v)\big)|
^{\frac {2n}{n+2}} dV_{\Bn}\right]^{\frac {n+2}{2n}}
\end{displaymath}
standard arguments using Vitali's convergence theorem shows that the term inside bracket is of
 $o(1).$ this proves the claim.\\\\
In view of the above claim if $\un -\wn$ does not converge to zero in $\Hunb$ we can repeat the above procedure for the PS sequence $\un -\wn$ to land in case 1 or case 2. In the first case we are through and in the second case either we will end up with a converging PS sequence or else we will repeat the process. But this process has to stop in finitely many stages as any PS sequence has nonnegative energy and each stage we are reducing the energy by a fixed positive constant. This proves Step 1.\\\\
{\it Step 2:} Let $u_n$ be a PS sequence. Then we know that $u_n$ is bounded and hence going to a subsequence if necessary we may assume that $u_n \rightharpoonup u$ in $\Hunb$, pointwise and in $L^{p+1}_{loc}(\Bn).$ Thus as before we can show that $u_n-u$ is a PS sequence converging weakly to zero, at level $d- \Il(u)$. Now the theorem follows from Step 1.

\end{subsection}

\section{A priori estimates} From the standard elliptic theory we know that the solutions of 
\eqref{problem} are in $C^2(\Bn)$. But we do not have any information on the nature of solution as $x \rightarrow \infty$ (equivalently as $|x|\rightarrow 1$). If $u$ is a positive solution of \eqref{problem}, then $u$ is radial with respect to a point and the exact behaviour of $u(x)$ as $x \rightarrow \infty$ has been obtained in  \cite{MS} by analysing the corresponding ode. In the general case we prove

\begin{teor}{\label{sup_th}}
Let $u$ be a solution of (\ref{problem}) then $u(x)\to 0$ and $|\nab u(x)|^2_{\Bn}\to 0$ as $x\to\infty$ in $\Bn$. In particular $u\in L^{\infty}(\Bn)$.
\end{teor}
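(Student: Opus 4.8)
The plan is to exploit the invariance of \eqref{problem} under the isometry group $I(\Bn)$ together with a uniform elliptic estimate, reducing decay at infinity to a weak-to-strong convergence statement for translated solutions. Fix a solution $u$. Since $I(\Bn)$ acts transitively on $\Bn$ and $\|u\circ\tau\|_{\Hunb}=\|u\|_{\Hunb}$ for every $\tau\in I(\Bn)$, it suffices to control $u\circ\tau$ on a fixed ball around the origin, uniformly in $\tau$, and then to read off the values of $u$ and of its hyperbolic gradient at $\tau(0)$ from the local behaviour of $u\circ\tau$ near $0$.

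\emph{First step (uniform local bound).} I would first show $u\in L^\infty(\Bn)$ and, more precisely, that there is a constant $C=C(\|u\|_{\Hunb})$ with $\|u\circ\tau\|_{L^\infty(B)}\le C$ for every $\tau\in I(\Bn)$, where $B=B_{\Bn}(0,1)$ is a fixed hyperbolic ball. Writing $v_\tau=u\circ\tau$, each $v_\tau$ solves $-\De_{\Bn}v_\tau=\big(\la+|v_\tau|^{p-1}\big)v_\tau$ with $\|v_\tau\|_{\Hunb}=\|u\|_{\Hunb}$ fixed. In the subcritical range $1<p<2^\ast-1$ this is a routine bootstrap using $\Hunb\hookrightarrow L^{2^\ast}(\Bn)$. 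In the critical case $p=2^\ast-1$ the coefficient $|v_\tau|^{p-1}$ lies in $L^{N/2}$ with norm controlled by $\|v_\tau\|_{L^{2^\ast}}$, hence by $\|u\|_{\Hunb}$; working in local coordinates on $B$, where the hyperbolic metric is uniformly comparable to the Euclidean one, a Brezis--Kato argument yields $v_\tau\in L^q_{\rm loc}$ for every $q<\infty$ with uniform bounds, and a Moser iteration upgrades this to the stated uniform $L^\infty$ bound. Transitivity of $I(\Bn)$ already gives $u\in L^\infty(\Bn)$. With the right-hand side uniformly bounded in $L^\infty(B)$, local Calder\'on--Zygmund and Schauder estimates give a uniform bound $\|v_\tau\|_{C^{2,\al}(\frac12 B)}\le C$.

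\emph{Second step (decay).} Let $b_n\in\Bn$ with $b_n\to\infty$, pick $\tau_n\in I(\Bn)$ with $\tau_n(0)=b_n$, and set $u_n=u\circ\tau_n$. As recorded at the start of this section, $u_n\deb 0$ in $\Hunb$. By the first step $\{u_n\}$ is bounded in $C^{2,\al}(\frac12 B)$, so by Arzel\`a--Ascoli a subsequence converges in $C^2(\frac14 B)$ to some $w$; passing to the limit in the equation shows $w$ solves \eqref{problem} locally, and the weak limit being $0$ forces $w\equiv 0$. Thus $u_n\to 0$ in $C^2$ near $0$ along the subsequence, and since the limit does not depend on the subsequence, along the full sequence. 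Evaluating at $0$ gives $u(b_n)=u_n(0)\to 0$, and since the hyperbolic gradient norm is isometry invariant, $|\nab u|^2_{\Bn}(b_n)=|\nab u_n|^2_{\Bn}(0)\to 0$. As $b_n\to\infty$ was arbitrary, $u(x)\to 0$ and $|\nab u(x)|^2_{\Bn}\to 0$ as $x\to\infty$; boundedness of the continuous function $u$ on the remaining compact part of $\Bn$ then gives $u\in L^\infty(\Bn)$.

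The main obstacle is the first step in the critical case: obtaining the $L^\infty$ bound with a constant depending only on the $\tau$-invariant quantity $\|u\|_{\Hunb}$, so that it is genuinely uniform over the noncompact family $\{u\circ\tau\}$. This uniformity is precisely what powers the translation/compactness argument of the second step; the subcritical case and all the gradient estimates then reduce to standard elliptic regularity.
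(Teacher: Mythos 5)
Your proposal follows essentially the same route as the paper: a Moser/Brezis--Kato type iteration giving an $L^\infty$ bound on a fixed ball that is uniform over the family $\{u\circ\tau\}_{\tau\in I(\Bn)}$, followed by translating $b_n\to\infty$ back to the origin and using local elliptic compactness together with the weak limit being zero to get $C^1$ decay of $u$ and $\nabla_{\Bn}u$. One small caveat: in the critical case the constant cannot depend only on $\|u\|_{H^1(\Bn)}$ (the splitting of the potential $|u|^{p-1}$ needs a level $K$ with $\int_{|u|>K}|u|^{2^*}dV_{\Bn}$ small, and $K$ depends on $u$ itself, not just on its norm); what saves the argument --- and what the paper records explicitly at the end of its Step 1 --- is that this quantity is invariant under $u\mapsto u\circ\tau$, so the resulting bound is still uniform over the whole family, which is all your second step actually uses.
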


\begin{proof}
We will prove the theorem in a few steps. First we will show that $u$ is bounded.\\
\textbf{Step 1}: Let $0<R<2R<\frac{3}{4}<1$ then there exists $q>2^{*}$ and a constant $C>0$ such that $|u\circ\tau|_{L^q(B(0,R))}\leq C$ for all $\tau\in I(\Bn)$.
\\
{\it Proof of step 1:} Since $u\circ\tau $ is also a solution of the same equation for any $\tau\in I(\Bn)$ , we will prove this step by proving a bound on $|u|_{L^q(B(0,R))}$ and observing that the bound remains the same if $u$ is replaced by $u\circ\tau$.\\ Define, $\bar u=u^{+}+1$ and  
\begin{eqnarray*}
\um &=&\bar u \  \ \mbox{if}\ \ u<m\\
&=& 1+m \  \ \mbox{if}\ \ u\geq m 
\end{eqnarray*}
For $\ba>0$ define the test function $v=v_{\ba}$ as $v_\ba=\va^2(\um^{2\ba}\bar u-1)$ where $\va\in C_{0}^{\infty}(\Bn), 0\leq\va\leq 1$,
$\va\equiv 1$ in $B(0,r_{i+1})$, supp$\va\subseteq B(0,r_i)$, $R<r_{i+1}<r_i<2R$ and $|\na\va|\leq\frac{C}{r_i-r_{i+1}}$.\\
Then $0\leq v\in H^1_0(B(0,r_i))$ and hence from (\ref{problem}) we can write
\begin{displaymath}
\Ibn\big<\na_{\Bn}u,\na_{\Bn}v\big>dV_{\Bn}=\la\Ibn uvdV_{\Bn}+\Ibn |u|^{p-1}uvdV_{\Bn}
\end{displaymath}
Now substituting $v$ we get\\
L.H.S= 
\begin{displaymath}
\Ibn \big[\um^{2\ba}\va^2\na u\na\bar u+2\ba\um^{2\ba-1}\bar u\va^2\na u\na\um+2\va(\um^{2\ba}\bar u-1)\na u\na\va\big]
(\frac{2}{1-|x|^2})^{N-2}dx
\end{displaymath}
In the support of 1st integral $\na u=\na\bar u$, and in the support of 2nd integral $\um=\bar u$, $\na\um=\na u$.
Therfore using Cauchy-Schwartz along with the above fact we get\\

\begin{eqnarray}{\label{A'}}
L.H.S &\geq&\frac{1}{2}\Ibn\um^{2\ba}\va^2|\nab\bar u|^{2}_{\Bn}dV_{\Bn}+2\ba\Ibn\um^{2\ba}\va^2|\nab\um|^{2}_{\Bn}dV_{\Bn}
\no\\
&-& 2\Ibn\um^{2\ba}\bar u^2|\nab\va|^{2}_{\Bn}dV_{\Bn}
\end{eqnarray}
So we have
\begin{eqnarray}{\label{B'}}
&\frac{1}{2}&\Ibn\um^{2\ba}\va^2|\nab\bar u|^{2}_{\Bn}dV_{\Bn}+2\ba\Ibn\um^{2\ba}\va^2|\nab\um|^{2}_{\Bn}dV_{\Bn}
\no\\
&\leq&\la\Ibn\va^2(\um^\ba\bar u)^2dV_{\Bn}+\Ibn|u|^{p-1}(\um^\ba\bar u)^2\va^2dV_{\Bn}
\no\\
&+&2\Ibn(\um^\ba\bar u)^2|\nab\va|^2dV_{\Bn}
\end{eqnarray}
Now set $\um^\ba\bar u=w$ then we get
\begin{displaymath}
\va^2|\na w|^2\leq(1+\frac{1}{\var})\ba^2\um^{2\ba}|\na\um|^2\va^2+(1+\var)\um^{2\ba}|\na\bar u|^2\va^2
\end{displaymath}
Now choosing $\var=\frac{\ba}{4+7\ba}$ we get
\begin{eqnarray*}
\frac{1}{4(1+2\ba)}|\na(\va w)|^2 &\leq& 2\ba\um^{2\ba}\va^2|\na\um|^{2}+\um^{2\ba}|\na\bar u|^2\va^2
\no\\
&+& \frac{1}{2(1+2\ba)}|\na\va|^2w^2
\end{eqnarray*}
Now using $\frac{2}{4+7\ba}<\frac{1}{2}$ and (\ref{B'})
\begin{eqnarray}\label{C'}
\frac{1}{4(1+2\ba)}\Ibn|\nab(\va w)|^2dV_{\Bn} &\leq&\int_{B(0,r_i)}[\la+|u|^{p-1}](\va w)^2dV_{\Bn}
\no\\
&+&\frac{C}{(r_i-r_{i+1})^2}\int_{B(0,r_i)}w^2 dV_{\Bn}
\end{eqnarray}
Therefore,
\begin{eqnarray}{\label{D'}}
\frac{1}{4(1+2\ba)}\Ibn|\nab(\va w)|^2dV_{\Bn}&\leq& M(R)\int_{B(0,2R)}|w|^2dV_{\Bn}
\no\\
&+&\Ibn |u|^{p-1}(\va w)^2dV_{\Bn}
\end{eqnarray}
Now for any $K>0$
\begin{eqnarray*}\label{K'}
\Ibn |u|^{p-1}(\va w)^2dV_{\Bn}&\leq& K^{p-1}\Ibn(\va w)^2dV_{\Bn}
\no\\
&+&(\int_{|u|>K}|u|^{2^{*}}dV_{\Bn})^\frac{p-1}{2^{*}}(\Ibn(\va w)^{\frac{2\cdot2^{*}}{2^{*}-p+1}}dV_{\Bn})^\frac{2^{*}-p+1}{2^{*}}
\end{eqnarray*}
Now if $p=2^{*}-1$ then choose $K>0$ large enough so that $$(\int_{|u|>K}|u|^{2^{*}}dV_{\Bn})^\frac{p-1}{2^{*}}<\frac{1}{8(1+2\ba)}$$
and if $p<2^{*}-1$ then choose $K>0$ large enough so that $$(\int_{|u|>K}|u|^{2^{*}}dV_{\Bn})^\frac{p-1}{2^{*}}|V_{\Bn}(B(0,2R))|^{\frac{2}{q}-\frac{2}{2^{*}}}<\frac{1}{8(1+2\ba)}$$
where $q=\frac{2\cdot 2^{*}}{2^{*}-p+1}$. So that in both the cases 
\begin{equation}
\int_{|u|>K} |u|^{p-1}(\va w)^2dV_{\Bn}\leq \frac{1}{8(1+2\ba)}\Ibn|\nab(\va w)|^{2}_{\Bn}dV_{\Bn}
\end{equation} 
Hence from (\ref{D'}) we get
\begin{displaymath}
\Ibn|\nab(\va w)|^2dV_{\Bn}\leq C(R)\int_{B(0,2R)}|w|^2dV_{\Bn} 
\end{displaymath}
i.e, $(\int_{B(0,R)}|w|^{2^{*}}dV_{\Bn})^\frac{2}{2^{*}}\leq C(R)\int_{B(0,2R)}|w|^2dV_{\Bn} $\\
Now notice that, $\um^{\ba+1}\leq w=\um^\ba\bar u\leq\bar u^{\ba+1}$. Now if we choose $2(\ba+1)=2^{*}$,
i.e. $\ba=\frac{2}{N-2}$ then we can write
\begin{displaymath}
(\int_{B(0,R)}\um^{(\ba+1)2^{*}}dV_{\Bn})^\frac{2}{2^*}\leq C(R)\int_{B(0,2R)}\bar u^{2^*}dV_{\Bn}
\end{displaymath}
Now letting $m\to\infty$ we get 
\begin{displaymath}
(\int_{B(0,R)}\bar u^{(\ba+1)2^{*}}dV_{\Bn})^\frac{2}{2^*}\leq C(R)||u||_{\Hunb}^{2^{*}}
\end{displaymath}
Hence $u\in L^{(\ba+1)2^{*}}(B(0,R))$ and $|u|_{L^{(\ba+1)2^{*}}(B(0,R))}\leq C(N, u, ||u||_{\Hunb})$.\\
Now note that dependence of the constant $C$ on $u$ is because of the fact that constant $K$ in
(\ref{K'}) depends on $u$. Now  $$(\int_{|u|>K}|u|^{2^{*}}dV_{\Bn})^\frac{p-1}{2^{*}}<\frac{1}{8(1+2\ba)}$$
implies  $(\int_{|v|>K}|v|^{2^{*}}dV_{\Bn})^\frac{p-1}{2^{*}}<\frac{1}{8(1+2\ba)}$ where $v=u\circ\tau$
for $\tau\in I(\Bn)$. Hence Step 1 follows.
\spa

\textbf{Step 2}: Let $R$ be as in Step 1, then  there exists $C>0$ such that $\displaystyle\sup_{B(0,R)}|u\circ\tau|\leq C$, for all $\tau\in I(\Bn)$ and hence $u$ is bounded.

\spa

{\it Proof:} As in the previous step we will prove $\displaystyle\sup_{B(0,R)}|u|\leq C$ and the constant remains unchanged if $u$ is replaced by $u\circ\tau$.\\ Thanks to the Step 1 we have $\la+|u|^{p-1}\in L^{q}_{B(0,2R)}$ for 
some $q>\frac{N}{2}$. Define, $\la+|u|^{p-1}=g$, hence $|g|_{L^{q}_{B(0,2R)}}\leq C(R,||u||_{\Hunb})$.
From the expression (\ref{C'}) of step 1 we can see that
\begin{eqnarray}{\label{E'}}
\frac{1}{4(1+2\ba)}\Ibn|\nab(\va w)|^2dV_{\Bn} &\leq& C(R,||u||_{\Hunb})|(\va w)^2|_{L^{q^{'}}(\Bn)}
\no\\
&+&\frac{C}{(r_i-r_{i+1})^2}\int_{B(0,r_i)}w^2 dV_{\Bn}
\end{eqnarray}
where $\frac{1}{q}+\frac{1}{q^{'}}=1$. Since $q>\frac{N}{2}\Rightarrow q^{'}<\frac{N}{N-2}=r$(say). Now
let $\frac{1}{q^{'}}=\theta+\frac{1-\theta}{r}$. then using interpolation inequality we get
\begin{displaymath}
|(\va w)^2|_{L^{q^{'}}}\leq\var(1-\theta)|(\va w)^2|_{L^r}+c_1\var^{-\frac{1-\theta}{\theta}}|(\va w)^2|_{L^1}
\end{displaymath}
where $\theta$ depends on $N, t, q^{'}$. Note that $2r=2^{*}$.
Therefore,
\begin{displaymath}
|(\va w)^2|_{L^r}=|\va w|_{L^{2^{*}}(\Bn)}^2\leq C|\nab(\va w)|^2_{L^2(\Bn)}
\end{displaymath}
Hence
\begin{displaymath}
|(\va w)^2|_{L^{q^{'}}}\leq C\var|\nab(\va w)|^2_{L^2(\Bn)} +C\var^{-\al}|(\va w)^2|_{L^1(\Bn)}
\end{displaymath}
Now choosing $\var$ suitably we can write from (\ref{E'})
\begin{displaymath}
\Ibn|\nab(\va w)|^2dV_{\Bn} \leq \frac{C(1+\ba)^\al}{(r_i-r_{i+1})^2}\int_{B(0,r_i)}w^2 dV_{\Bn}
\end{displaymath}
where $C$ depends on $||u||_{\Hunb},N$. Now using Poincar\'{e}-Sobolev inequality in the above expression 
we get
\begin{displaymath}
(\int_{B(0,r_{i+1})}w^{\frac{2N}{N-2}}dV_{\Bn})^\frac{N-2}{N}\leq\frac{C(1+\ba)^\al}{(r_i-r_{i+1})^2}\int_{B(0,r_i)}w^2 dV_{\Bn}
\end{displaymath}
Now using $\chi=\frac{N}{N-2}>1$, $w=\um^\ba\bar u$, $\um\leq\bar u$ and $\ga=2(\ba+1)$ we get
\begin{displaymath}
\big(\int_{B(0,r_{i+1})}\um^{\ga\chi}dV_{\Bn}\big)^\frac{1}{\ga\chi}\leq\big[\frac{C(1+\ba)^\al}{(r_i-r_{i+1})^2}\big]^\frac{1}{\ga}(\int_{B(0,r_i)}\bar u^\ga dV_{\Bn})^\frac{1}{\ga}
\end{displaymath}
Now letting $m\to\infty$ we get
\begin{displaymath}
\big(\int_{B(0,r_{i+1})}\bar u^{\ga\chi}dV_{\Bn}\big)^\frac{1}{\ga\chi}\leq\big[\frac{C(1+\ba)^\al}{(r_i-r_{i+1})^2}\big]^\frac{1}{\ga}(\int_{B(0,r_i)}\bar u^\ga dV_{\Bn})^\frac{1}{\ga}
\end{displaymath}
provided $|\bar u|_{L^\ga(B(0,r_i))}$ is finite. $C$ is a positive constant independent of $\ga$. Now 
we will complete the proof by iterating the above relation. Let us take $\ga=2, 2\chi, 2\chi^2$...
i.e. $\ga_i=2\chi^i$ for $i=0,1,2,$... $r_i=R+\frac{R}{2^i}$. Hence for $\ga=\ga_i$ we get
\begin{displaymath}
\big(\int_{B(0,r_{i+1})}\bar u^{\ga_{i+1}}dV_{\Bn}\big)^\frac{1}{\ga_{i+1}}\leq C^\frac{i}{\chi^i}\big(\int_{B(0,r_{i})}\bar u^{\ga_{i}}dV_{\Bn}\big)^\frac{1}{\ga_{i}}
\end{displaymath}
$C>1$ is a constant depends on $R,N,||u||_{\Hunb}$. Now by iteration we obtain
\begin{displaymath}
\big(\int_{B(0,r_{i+1})}\bar u^{\ga_{i+1}}dV_{\Bn}\big)^\frac{1}{\ga_{i+1}}\leq C^{\Si\frac{i}{\chi^i}}\big(\int_{B(0,2R)}\bar u^2dV_{\Bn}\big)^\frac{1}{2}
\end{displaymath}
Letting $i\to\infty$ we obtain
\begin{displaymath}
\displaystyle\sup_{B(0,R)}\bar u\leq\tilde C|\bar u|_{L^2(B(0,2R))}\leq C_1||u||_{\Hunb}\leq C||u||_{H_{\la}}
\end{displaymath}
Hence $u^{+}$ is bounded in $B(0, R)$. Applying the same argument to $-u$ instead of $u$ we get $u^{-}$
is also bounded by the same. Since we can take $\tau = \tau_b,$ the hyperbolic translation for any $b\in \Bn$ we get $\displaystyle\sup_{\Bn}|u|\leq C$.

\spa

\textbf{Step 3}: $u(x)\to 0$ and $|\nab u(x)|^2_{\Bn}\to 0$ as $x\to\infty$ in $\Bn$.
\spa

{\it Proof:} Let $\bn\in \Bn$ such that  $\bn\to\infty$. Let $ \tau_n\in I(\Bn)$ be the hyperbolic isometry such that  
$\tau_n(0)=\bn$. Define $\vn=u\circ\tau_n$, then we know that $\vn\deb 0$ in $H^1(\Bn)$. Since $\vn$'s are uniformly bounded and $\vn$ satisfies \eqref{problem}, we get $\De_{\Bn}\vn$ is uniformly bounded and hence $\vn$'s are uniformly bounded in $ W^{2,p}_{loc}(\Bn), \ \ 
\fa p, 1<p<\infty$. Combining with Sobolev embedding theorem we get $\vn \to 0$ in $C^1(B(0,\frac{1}{2}))$. In prticular $|\vn(0)|\to 0 $ and $|\na \vn (0)| \to 0 $. Writing in terms of $u$, we get $|u(\bn)|\to 0 $ and $(1-|\bn|^2)|\na u(\bn)| \to 0 .$ Now the theorem follows as $|\nab u(x)|^2_{\Bn}=(\frac{2}{1-|x|^2})^2|\nab u|^2 = (\frac{1-|x|^2}{2})^2|\na u|^2 $.
\end{proof}
Next we prove an improvement of the above result under some restrictions on $\la.$
\begin{teor}\label{cont-2nd}
\label{cont}If u is solution to the equation \eqref{problem} and $\la\leq\frac{N(N-2)}{4}$ 
then $u(x)\leq C(\frac{1-|x|^2}{2})^\frac{N-2}{2}$.
\end{teor}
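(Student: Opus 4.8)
The plan is to prove the bound by a comparison (barrier) argument, the point being that $w(x):=\left(\frac{1-|x|^2}{2}\right)^{\frac{N-2}{2}}$ is, up to the zeroth order term, an exact solution of the linearised problem. Indeed, writing $\phi=\left(\frac{2}{1-|x|^2}\right)^{\frac{N-2}{2}}$ so that $w=\phi^{-1}$, the conformal transformation law recalled in Section 2 gives the identity
\[
-\De_{\Bn}f-\frac{N(N-2)}{4}f=-\phi^{-\frac{N+2}{N-2}}\,\Delta(\phi f)
\]
for every $f$; taking $f=w$ (so that $\phi f\equiv1$ and $\Delta(\phi f)=0$) yields $-\De_{\Bn}w=\frac{N(N-2)}{4}\,w$. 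Hence, for $\la\le\frac{N(N-2)}{4}$,
\[
-\De_{\Bn}w-\la w=\Big(\tfrac{N(N-2)}{4}-\la\Big)w\ge0 ,
\]
so $w$ is a supersolution of the linear part of \eqref{problem}, and the hypothesis on $\la$ is exactly the condition making $w$ a supersolution. For this reason I expect the restriction $\la\le\frac{N(N-2)}{4}$ to be built into the argument rather than being an artefact.

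Write the equation as $-\De_{\Bn}u=(\la+|u|^{p-1})u$ and set $L=-\De_{\Bn}-\la-|u|^{p-1}$, so $Lu=0$. By Theorem \ref{sup_th} the potential $|u|^{p-1}$ is bounded and tends to $0$ as $x\to\infty$; fix $R<1$ so that $|u(x)|^{p-1}\le\frac{N(N-2)}{4}-\la$ on $\Omega_R:=\{|x|>R\}$. When $\la<\frac{N(N-2)}{4}$ this gives $Lw=\big(\tfrac{N(N-2)}{4}-\la-|u|^{p-1}\big)w\ge0$ on $\Omega_R$, i.e. $w$ is a genuine positive supersolution of the full operator there. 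In the borderline case $\la=\frac{N(N-2)}{4}$ the same computation gives $Lw=-|u|^{p-1}w\le0$, so I would instead use the slightly faster barrier $\bar w=w\,e^{-\delta\,d(0,\cdot)}$, where $d$ is the hyperbolic distance and $\delta\in(0,1)$ is small: using $-\De_{\Bn}w=\frac{N(N-2)}{4}w$ and $\coth\to1$ a direct computation shows $L\bar w\ge\big(\delta(1-\delta)-|u|^{p-1}\big)\bar w\ge0$ on $\Omega_R$ after enlarging $R$. Since $\bar w\le w$, a bound $u\le C\bar w$ still yields $u\le Cw$.

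With a positive supersolution $h\in\{w,\bar w\}$ of $L$ in hand, I carry out the comparison through the substitution $z=u/h$. From $Lu=0$ and $L(hz)=z\,Lh-h\De_{\Bn}z-2\langle\nab h,\nab z\rangle$ one obtains
\[
-\De_{\Bn}z-2\langle\nab\log h,\nab z\rangle+\frac{Lh}{h}\,z=0 ,
\]
an operator whose zeroth order coefficient $Lh/h\ge0$ has the right sign, so the weak maximum principle is available. On the inner boundary $\{|x|=R\}$ one has $z\le M/h(R)=:C_0$ with $M=\|u\|_{L^\infty}$ from Theorem \ref{sup_th}, and the theorem reduces to the inequality $\limsup_{x\to\infty}z\le C_0$, that is, to $u$ decaying at infinity at least as fast as the barrier. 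This last point is the main obstacle: Theorem \ref{sup_th} only gives $u\to0$ with no rate, and a pure maximum principle argument on the unbounded region $\Omega_R$ is circular, because the outer boundary term $\sup_{|x|=\rho}|u|/h$ cannot be controlled by the qualitative statement $u\to0$ alone.

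To break the circularity I would first upgrade $u\to0$ to a quantitative exponential decay, using that $u\in H^1(\Bn)$ and that $\la<\left(\frac{N-1}{2}\right)^2$ makes the quadratic form in \eqref{PSE} coercive on $\Omega_R$ once $|u|^{p-1}$ is small. Testing the equation against $e^{2\al d(0,\cdot)}u$ cut off on hyperbolic annuli yields, after integrating the cross term by parts and using $\De_{\Bn}d=(N-1)\coth d$, a weighted energy bound $\int_{\Bn}e^{2\al d(0,x)}u^2\,dV_{\Bn}<\infty$ for every $\al<\tfrac12\sqrt{(N-1)^2-4\la}$. Combining the resulting $L^2$-decay of $u$ over unit hyperbolic balls with the translation-uniform local $L^\infty$ estimate of Theorem \ref{sup_th} (Step 2) then converts this into a pointwise rate $|u(x)|\le Ce^{-\gamma d(0,x)}$ with $\gamma=\al+\frac{N-1}{2}>\frac{N-2}{2}$; since $w\sim e^{-\frac{N-2}{2}d(0,\cdot)}$ near the boundary, this makes $z=u/h$ bounded at infinity. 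Feeding this into the maximum principle above closes the comparison and gives $u\le Cw$ on $\Omega_R$; on $\{|x|\le R\}$ the bound is trivial as $u$ is bounded and $w$ is bounded below, and applying the same argument to $-u$ yields the two-sided estimate. I expect the quantitative decay step to be the delicate part—the extra $\frac{N-1}{2}$ in the exponent must be extracted by keeping the volume factor $\sinh^{N-1}$ inside the embedding—while the barrier construction and the maximum principle are routine once the decay is secured.
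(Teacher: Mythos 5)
Your reduction to a decay estimate is sound, and the observation that $w=(\tfrac{1-|x|^2}{2})^{\frac{N-2}{2}}$ satisfies $-\De_{\Bn}w=\tfrac{N(N-2)}{4}w$ (hence is a supersolution of the linear part precisely when $\la\le\tfrac{N(N-2)}{4}$) is correct. But the step on which everything hinges --- upgrading $u\to0$ to $|u(x)|\le Ce^{-\gamma d(0,x)}$ with $\gamma>\tfrac{N-2}{2}$ --- is not justified as written, and the mechanism you invoke does not produce it. The Agmon-type weighted estimate $\int_{\Bn}e^{2\al d}u^2\,dV_{\Bn}<\infty$ for $\al<\tfrac12\sqrt{(N-1)^2-4\la}$ is plausible, but converting it to a pointwise rate via local elliptic estimates on unit hyperbolic balls gives only $|u(x)|\le Ce^{-\al d(0,x)}$: by homogeneity of $\Bn$ the constant in $\sup_{B(x,1/2)}|u|^2\le C\int_{B(x,1)}u^2\,dV_{\Bn}$ is independent of $x$ and the volume of a \emph{unit ball centered at $x$} does not grow with $d(0,x)$, so the factor $\sinh^{N-1}$ (which measures the volume of $B(0,d)$, not of translated unit balls) cannot be "kept inside the embedding" to yield the extra $\tfrac{N-1}{2}$ in the exponent. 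Since $\la\le\tfrac{N(N-2)}{4}$ only guarantees $\tfrac12\sqrt{(N-1)^2-4\la}\ge\tfrac12$, the rate you actually obtain can be as weak as $e^{-d/2}$, far short of the required $e^{-\frac{N-2}{2}d}$ when $N$ is large. Without that rate the comparison cannot be closed: as you yourself note, the outer boundary term is the whole difficulty, and any barrier $e^{-\beta d}$ that is a supersolution of $-\De_{\Bn}-\la-|u|^{p-1}$ near infinity must have $\beta>\tfrac{N-1}{2}-\tfrac12\sqrt{(N-1)^2-4\la}\ge\tfrac{N-2}{2}$, so interpolating a slowly-decaying supersolution between $u$ and $w$ is impossible unless $u$ already decays faster than $w$ --- which is the statement to be proved. (Conversely, if you did have $\gamma>\tfrac{N-2}{2}$, the barrier and maximum principle would be superfluous, since $e^{-\gamma d}\le C(\tfrac{1-|x|^2}{2})^{\frac{N-2}{2}}$ directly.) To repair this route you would need a genuine bootstrap of the decay rate up to the indicial root $\tfrac{N-1}{2}+\tfrac12\sqrt{(N-1)^2-4\la}$, e.g.\ through the Green's function or an ODE analysis as in \cite{MS}; none of this is sketched.

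For comparison, the paper avoids asymptotics entirely: it performs the conformal change $v=(\tfrac{2}{1-|x|^2})^{\frac{N-2}{2}}u$, under which the desired bound becomes exactly $v\in L^\infty(\Bn)$ for the Euclidean problem \eqref{EE} with $\tilde\la=\la-\tfrac{N(N-2)}{4}\le0$; the sign of $\tilde\la$ allows the singular potential to be discarded and a Moser iteration to be run on balls centered at boundary points (with a Hardy inequality replacing Sobolev in the subcritical case). The hypothesis $\la\le\tfrac{N(N-2)}{4}$ enters there through the same constant $\tfrac{N(N-2)}{4}$ you identified, but is exploited in a way that sidesteps the decay-rate question your argument founders on.
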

\begin{proof} First consider the case $p=2^{*}-1$. In this case
using the conformal change of metric we know that if $u$ solves \eqref{problem} then 
$v=(\frac{2}{1-|x|^2})^\frac{N-2}{2}u$ solves \eqref{EE}. Now if $\la\leq\frac{N(N-2)}{4}$ 
then $\tilde\la\leq 0$. Thus, enough to prove\\
{\it Claim:} $v\in L^{\infty}(\Bn)$\\
From standard elliptic theorey we know that $v \in C^2(\Bn).$ Now to prove the bound near infinity, we do a Moser iteration. Fix a point $x_0\in\R^N$ such that $|x_0|=1.$ 
Define $\bar v=v^{+}+1$ and 
\begin{eqnarray*}
\vm &=&\bar v \  \ \mbox{if}\ \ v<m\\
&=& 1+m \  \ \mbox{if}\ \ v\geq m 
\end{eqnarray*}
For $\ba>0$ and $x\in\Bn$ define the test function $w=w_{\ba}$ as $w_\ba=\va^2(\vm^{2\ba}\bar v-1)$ where $\va\in C_{0}^{\infty}(\Rn), 0\leq\va\leq 1$,
$\va\equiv 1$ in $B(x_0,r_{i+1})$, supp$\va\subseteq B(x_0,r_i)$, $R<r_{i+1}<r_i<2R<\frac12 $  and $|\na\va|\leq\frac{C}{r_i-r_{i+1}}$.\\
Then $0\leq w\in H^1_0(\Bn)$ and hence from \eqref{EE} we can write
\begin{displaymath}
\Ibn\big<\na v,\na w\big>dx=\tilde\la\Ibn (\frac{2}{1-|x|^2})^2 vwdx+\Ibn |v|^{2^{*}-1}vwdx
\end{displaymath}
Now substituting $w$ we get\\
L.H.S= 
\begin{displaymath}
\Ibn \big[\vm^{2\ba}\va^2\na v\na\bar v+2\ba\vm^{2\ba-1}\bar v\va^2\na v\na\vm+2\va(\vm^{2\ba}\bar v-1)\na v\na\va\big]dx
\end{displaymath} 
Again as in Step 1 of Theorem \ref{sup_th} we get
\begin{eqnarray}
&\frac{1}{2}&\Ibn\vm^{2\ba}\va^2|\na\bar v|^{2}dx+2\ba\Ibn\vm^{2\ba}\va^2|\na\vm|^{2}dx
\no\\
&\leq&\tilde\la\Ibn(\frac{2}{1-|x|^2})^2\va^2(\vm^\ba\bar v)^2dx+\Ibn|v|^{2^{*}-2}(\vm^\ba\bar v)^2\va^2dx
\no\\
&+&2\Ibn(\vm^\ba\bar v)^2|\na\va|^2dx
\end{eqnarray}
Since $\tilde\la\leq 0$ we can ignore the term which contains singularity at the origin 
to obtain 
\begin{eqnarray}
\frac{1}{2} \Ibn\vm^{2\ba}\va^2|\na\bar v|^{2}dx+2\ba\Ibn\vm^{2\ba}\va^2|\na\vm|^{2}dx \no\\
\leq \Ibn|v|^{2^{*}-2}(\vm^\ba\bar v)^2\va^2dx + 2\Ibn(\vm^\ba\bar v)^2|\na\va|^2dx \no
\end{eqnarray}
Now we can do the standard Moser iteration techniques as in Step 1 and 
Step 2 of Theorem \ref{sup_th} to conclude $v\in L^{\infty}(B(x_0, R)\cap \Bn)$. Since $x_0$ is arbitrary  and we can cover $\Bn \cap \{x : |x|\ge \frac R2 \}$ by finitely many sets of the form $B(x_0, R)\cap \Bn$ the claim follows.\\

When $p< 2^\ast -1$, the conformal change will give us an equation of the form 
$$-\De v - \tilde\la \left(\frac{2}{1-|x|^2}\right)^2v =  \frac{|v|^{p_t-1}v}{(1-|x|^2)^t}, v \in H^1_0(\Bn)$$ where $ t= N-\frac{N-2}{2}(p+1)$. Again one can proceed as before to do a Moser iteration to get the result. Of course while estimating the terms on the RHS one has to use the Hardy inequality $\int\limits_{\Bn}|\na u|^2 dx \ge C \left(\int\limits_{\Bn} \frac{|u|^{p_t}}{(1-|x|^2)^t} dx \right)^{\frac{2}{p_t}} $ in place of the usual Sobolev inequality.
\end{proof}

\section{Existence and Non Existence of sign changing radial solutions}

In this section we will study the existence and non existence of sign changing solutions of the problem

\begin{equation}
\label{H}
\left.\begin{array}{rlllll}
 -\De_{\Bn} u-\la u &=& |u|^{p-1}u\ \ \mbox {in}\ \Bn\\
u &\in& H^1(\Bn)\\
\end{array}\right\}
\end{equation}
where $\la < (\frac{N-1}{2})^2 $ and $1<p\le\frac{N+2}{N-2}$.\\
We will see below that there is a significant difference between the cases $1<p<\frac{N+2}{N-2}$ and $p=\frac{N+2}{N-2}$. In the subcritical case we have

\begin{teor} {\label{K}} Let $1<p<\frac{N+2}{N-2}$, then there exists a sequence of solutions $u_k$ of \eqref{H} such that $||u_k||\rightarrow \infty $ as ${k\to\infty}$.
\end{teor}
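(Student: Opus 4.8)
The plan is to obtain the solutions variationally as critical points of $I_\la$ restricted to the radial subspace $H^1_r(\Bn)$, where the loss of compactness disappears by Theorem \ref{cpt}, and then to recover genuine solutions of \eqref{H} via the principle of symmetric criticality. Since $\la < (\frac{N-1}{2})^2$, the Poincar\'e--Sobolev inequality \eqref{PSE} shows that $\|u\|_\la^2 := \int_{\Bn}\big[|\na_{\Bn}u|_{\Bn}^2 - \la u^2\big]\,dV_{\Bn}$ is an equivalent norm on $H^1(\Bn)$, so I work in the closed subspace $E = H^1_r(\Bn)$ equipped with this norm. The functional $I_\la \in C^1(E,\R)$ is even with $I_\la(0)=0$. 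The fact that makes the whole argument run is that, because $2 < p+1 < 2^* = \frac{2N}{N-2}$, Theorem \ref{cpt} yields a \emph{compact} embedding $E \hookrightarrow L^{p+1}(\Bn)$.

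I would then verify the hypotheses of the symmetric mountain pass theorem of Ambrosetti and Rabinowitz. For the geometry near the origin, writing $I_\la(u) = \tfrac12\|u\|_\la^2 - \tfrac{1}{p+1}\|u\|_{p+1}^{p+1}$ and bounding $\|u\|_{p+1}$ by $\|u\|_\la$ through \eqref{PSE}, the condition $p+1 > 2$ produces constants $\rho,\alpha>0$ with $I_\la(u) \ge \alpha$ on $\|u\|_\la = \rho$. On any finite-dimensional subspace $W \subset E$ all norms are equivalent, so $I_\la(u) \to -\infty$ as $\|u\|_\la \to \infty$ in $W$, and hence $\{u \in W : I_\la(u) \ge 0\}$ is bounded. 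For the Palais--Smale condition, PS sequences are bounded (as recorded in the excerpt), so along a subsequence $u_n \deb u$ in $E$ and, by the compact embedding, $u_n \to u$ strongly in $L^{p+1}(\Bn)$; pairing $I_\la'(u_n)-I_\la'(u)$ with $u_n-u$ and using this strong convergence forces $\|u_n-u\|_\la \to 0$, giving $(PS)_c$ for every level $c$.

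With these verified, the symmetric mountain pass theorem furnishes an unbounded sequence of critical values $c_k \to +\infty$ with critical points $u_k \in E$. For each such $u_k$ the relation $I_\la'(u_k)=0$ gives $\|u_k\|_\la^2 = \|u_k\|_{p+1}^{p+1}$, whence $c_k = I_\la(u_k) = \big(\tfrac12 - \tfrac{1}{p+1}\big)\|u_k\|_\la^2$; since $c_k\to\infty$ and $\tfrac12-\tfrac{1}{p+1}>0$, this yields $\|u_k\| \to \infty$. Finally, because \eqref{H} is invariant under the orthogonal group (the isometries of $\Bn$ fixing the origin) and $E$ is exactly the fixed-point set of this action, the principle of symmetric criticality guarantees that each $u_k$ is a critical point of $I_\la$ on all of $H^1(\Bn)$, hence a genuine solution of \eqref{H}.

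The main obstacle is the verification of $(PS)_c$, and this is precisely where the restriction to $H^1_r(\Bn)$ is indispensable: on the full space $H^1(\Bn)$ the embedding into $L^{p+1}$ fails to be compact (as shown at the beginning of Section 3 via hyperbolic translations), so without the radial symmetry no such compactness would be available and the critical values could not be realized. The only other point requiring care is the passage from radial critical points back to solutions of the unrestricted problem, which is handled by symmetric criticality.
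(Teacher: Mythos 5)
Your proposal is correct and follows essentially the same route as the paper: restrict $I_\la$ to $H^1_r(\Bn)$, use the compact embedding of Theorem \ref{cpt} to obtain the Palais--Smale condition, apply the symmetric mountain pass/genus results of Ambrosetti--Rabinowitz to produce an unbounded sequence of critical values, and invoke Palais' principle of symmetric criticality to pass back to solutions of \eqref{H}. The paper states this in a few lines with references, while you have usefully filled in the verification of the mountain pass geometry, the $(PS)_c$ argument, and the deduction $\|u_k\|\to\infty$ from $c_k\to\infty$.
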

\begin{rmk}The above result holds when $\la= (\frac{N-1}{2})^2$, with $u_k \in \cal{H}$ and the corresponding norm goes to infinity as $k\rightarrow \infty.$
 \end{rmk}

As an immediate corollary we obtain the existence of sign changing solutions for the Hardy-Sobolev-Mazya equation and the critical Grushin equation.\\
\begin{teor}\label{HSMSC} The Hardy-Sobolev-Mazya equation \eqref{P} admits a sequence $v_k$ of sign changing solutions such that $||\na v_k||_2 \rightarrow \infty$ as ${k\to\infty}$.
\end{teor}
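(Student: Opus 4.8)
The plan is to read off the sign changing solutions of \eqref{P} directly from the hyperbolic solutions produced by Theorem \ref{K}, using the dictionary between cylindrically symmetric solutions of \eqref{P} and solutions of \eqref{problem} recorded in the Introduction. Given Hardy--Sobolev--Mazya data $n,k,\eta,t$ (with $2\le k<n$, $0\le\eta<\frac{(k-2)^2}{4}$ for $k>2$ and $\eta=0$ for $k=2$, $0\le t<2$), set
\[
N=n-k+1,\qquad p=p_t=\frac{n+2-2t}{n-2},\qquad \la=\eta+\frac{(n-k)^2-(k-2)^2}{4}.
\]
A cylindrically symmetric $v$, $v(x)=\tilde v(|y|,z)$, solves \eqref{P} iff $u=w\circ M$ with $w(r,z)=r^{\frac{n-2}{2}}\tilde v(r,z)$ solves \eqref{problem}; this correspondence is a bijection carrying the quadratic form $\int_{\R^n}(|\na v|^2-\eta|y|^{-2}v^2)\,dx$ to $\int_{\Bn}(|\na_{\Bn}u|^2-\la u^2)\,dV_{\Bn}$ and the nonlinearity $\int_{\R^n}|y|^{-t}|v|^{p_t+1}\,dx$ to $\int_{\Bn}|u|^{p+1}\,dV_{\Bn}$.

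First I would check that $(N,p,\la)$ meet the hypotheses of Theorem \ref{K}. From $t<2$ we get $p_t>1$, while $p_t\le\frac{n+2}{n-2}<\frac{n-k+3}{n-k-1}=\frac{N+2}{N-2}$ — the last inequality being equivalent to $k>1$ — so $p$ is subcritical (for $N\ge 3$; the degenerate case $N=2$, i.e. $n=k+1$, is covered by the $N=2$ refinements of the embedding and of Theorem \ref{K} referenced in the Introduction). Since $N-1=n-k$, the bound $\la<(\frac{N-1}{2})^2$ reduces to $\eta<\frac{(k-2)^2}{4}$, valid for $k>2$. When $k=2$ one has $\eta=0$ and $\la=(\frac{N-1}{2})^2$ exactly; here I would invoke the Remark after Theorem \ref{K}, giving the same conclusion with the solutions in $\mathcal H$.

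Applying Theorem \ref{K} (its Remark when $k=2$) produces solutions $u_k$ of \eqref{H} with $\|u_k\|\to\infty$; let $v_k$ be the associated cylindrically symmetric solutions of \eqref{P}. Because the passage $u_k\mapsto v_k$ is composition with the diffeomorphism $M^{-1}$ followed by multiplication by the positive weight $r^{-\frac{n-2}{2}}$, $v_k$ is sign changing iff $u_k$ is; and the $u_k$ must be sign changing for large $k$, since the equation is odd and by \cite{MS} the positive solution of \eqref{problem} is unique up to isometries and hence of fixed norm, ruling out $\pm u_k\ge 0$ once $\|u_k\|$ is large. For the Dirichlet growth, the form identity gives $\int_{\R^n}(|\na v_k|^2-\eta|y|^{-2}v_k^2)\,dx=\int_{\Bn}(|\na_{\Bn}u_k|^2-\la u_k^2)\,dV_{\Bn}$, and since $\la<(\frac{N-1}{2})^2$ the right-hand side is an equivalent norm on $H^1(\Bn)$ by \eqref{PSE}, so it diverges; the partial Hardy inequality $\int_{\R^n}|y|^{-2}v^2\,dx\le\frac{4}{(k-2)^2}\int_{\R^n}|\na v|^2\,dx$ together with $\eta<\frac{(k-2)^2}{4}$ makes the left form comparable to $\|\na v_k\|_2^2$, whence $\|\na v_k\|_2\to\infty$.

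The genuinely delicate point I would write out in full is this norm transfer: I must confirm that the norm $\|\cdot\|$ appearing in Theorem \ref{K} is actually equivalent to $\big(\int_{\Bn}|\na_{\Bn}\cdot|^2-\la\,(\cdot)^2\,dV_{\Bn}\big)^{1/2}$, so that its blow-up is intrinsic rather than an artifact of the chosen norm, and that in the borderline case $k=2$ the space $\mathcal H$ with its norm corresponds exactly to $D^{1,2}(\R^n)$ with the pure Dirichlet form (the Hardy term being absent), so that the form equivalence degenerates to an equality and the argument still closes. Everything else is a bookkeeping translation through the correspondence and poses no real difficulty.
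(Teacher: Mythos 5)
Your proposal is correct and follows the same route as the paper: transfer Theorem \ref{K} through the cylindrical-symmetry correspondence, check that $p_t<\frac{N+2}{N-2}$ and $\la<(\frac{N-1}{2})^2$, and convert $\|u_k\|\to\infty$ into $\|\na v_k\|_2\to\infty$. The paper delegates the norm transfer to \cite{MS}, Section 6, whereas you spell it out via the quadratic-form identity and the partial Hardy inequality, and you also make explicit the (correct) argument that the $u_k$ must eventually change sign because positive solutions are unique up to norm-preserving isometries — details the paper leaves implicit.
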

\begin{proof} As mentioned in the introduction cylindrically symmetric solutions of \eqref{P}
are in one one correspondence with the solutions of \eqref{problem} with $N=n-k+1$ and $p=p_t$ . One can easily see that  $p=p_t < \frac{N+2}{N-2}$, thus Theorem \ref{K} apply . Let $v_k$ be the solution of \eqref{P} corresponding to $u_k$, then since $ ||u_k|| \rightarrow \infty$ we get $||\na v_k||_2 \rightarrow \infty$ (see \cite{MS},section 6, for details). 
\end{proof}
Similarly we have 
\begin{teor}\label{GRUSC} The critical Grushin equation \eqref{GRU} admits a sequence $v_k$ of sign changing solutions such that $\|| v_k\||_2 \rightarrow \infty$ as ${k\to\infty}$.
\end{teor}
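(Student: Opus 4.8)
The plan is to run the argument of Theorem~\ref{HSMSC} verbatim, now through the Grushin dictionary recorded in the introduction. First I would recall that $\va$ is a cylindrically symmetric solution of \eqref{GRU} with $\||\va\||<\infty$ if and only if $u=\Phi\circ M$ solves \eqref{problem} with $N=h+1$, $p=\frac{Q+2}{Q-2}$ and $\la=\frac14\big[h^2-(\frac{k-2}{\al+1})^2\big]$, where $\Phi(r,z)=r^{\frac{Q-2}{2(1+\al)}}\va(r^{\frac1{1+\al}},z)$ and $M$ is the isometry between $\Bn$ and the upper half-space model. The structural point is that these parameters fall into the range where Theorem~\ref{K} is available: the introduction already records $p=\frac{Q+2}{Q-2}<2^\ast-1=\frac{N+2}{N-2}$, so we are genuinely subcritical, and since $N=h+1$ gives $(\frac{N-1}{2})^2=\frac{h^2}{4}$ we have $\la=\frac14\big[h^2-(\frac{k-2}{\al+1})^2\big]\le\frac{h^2}{4}=(\frac{N-1}{2})^2$, with strict inequality whenever $k\neq2$. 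Hence Theorem~\ref{K} applies directly for $k\neq2$, while the borderline case $k=2$, in which $\la=(\frac{N-1}{2})^2$, is covered by the Remark following Theorem~\ref{K} upon working in the space $\mathcal H$.

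Granting this, I would apply Theorem~\ref{K} to obtain a sequence $u_k$ of sign changing solutions of \eqref{problem} with $\|u_k\|\to\infty$, and let $v_k$ be the Grushin solution attached to $u_k$ by the correspondence, i.e.\ $\Phi_k=u_k\circ M^{-1}$ and $v_k$ read off from $\Phi_k(r,z)=r^{\frac{Q-2}{2(1+\al)}}v_k(r^{\frac1{1+\al}},z)$. That each $v_k$ changes sign is immediate: the substitution $r\mapsto r^{1/(1+\al)}$ is an increasing bijection of $(0,\infty)$, the weight $r^{\frac{Q-2}{2(1+\al)}}$ is strictly positive, and $M$ is a diffeomorphism, so $v_k$ and $u_k$ assume positive and negative values on corresponding sets.

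It remains to produce the divergence $\|| v_k\||_2\to\infty$, and this is the only substantive step. Here I would invoke the change-of-variables computation on which the correspondence rests, carried out for the positive-solution theory in \cite{MS} (section~6): the map $v\mapsto u$ is \emph{linear}, and it carries the Grushin Dirichlet form $\||v\||^2=\int_{\R^N}\big(|\na_y v|^2+(\al+1)^2|y|^{2\al}|\na_z v|^2\big)\,dy\,dz$ onto a hyperbolic quadratic energy which, since $\la<(\frac{N-1}{2})^2$, the Poincar\'e--Sobolev inequality \eqref{PSE} shows to be positive definite and equivalent to the full $H^1(\Bn)$ norm, hence to $\|u\|$. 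Consequently a bound on $\|| v_k\||_2$ would force a bound on $\|u_k\|$, contradicting $\|u_k\|\to\infty$. The delicate point one must check is precisely that the correspondence is an \emph{equivalence of the relevant norms} and not merely a bijection of solution sets, so that the blow-up of $\|u_k\|$ cannot be hidden in the lower-order terms that separate $\||\cdot\||_2$ from the hyperbolic energy; once this is in place the theorem follows exactly as in Theorem~\ref{HSMSC}.
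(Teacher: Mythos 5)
Your proposal is correct and follows essentially the same route as the paper, which proves Theorem \ref{GRUSC} by the word ``Similarly'' --- i.e.\ by repeating the proof of Theorem \ref{HSMSC}: invoke the Grushin--hyperbolic dictionary from the introduction, note $p=\frac{Q+2}{Q-2}<\frac{N+2}{N-2}$ and $\la\le(\frac{N-1}{2})^2$ so that Theorem \ref{K} (or its Remark in the borderline case $k=2$) applies, and transfer the norm divergence via the correspondence as detailed in \cite{MS}, Section 6. Your explicit verification of the borderline case and of the norm equivalence is more careful than what the paper writes down, but it is the same argument.
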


However in the critical case \eqref{H} does not have a solution always. In fact it 
follows from the results in \cite{S} that the Dirichlet problem
$$-\De_{\Bn} u-\la u = u^{\frac{N+2}{N-2}}\ \ \mbox {in}\ \ \ \Omega,\; u>0 \ \ \mbox {in}\ \ \ \Omega, u=0 \ \ \mbox {on}\ \ \ \partial\Omega $$
does not have a solution in a bounded star shaped domain $\Ome\subset\Bn$ if $N=3$ or $\la\leq\frac{N(N-2)}{4}$.
Non existence of positive solution to \eqref{H} in the case of $N=3$ or $\la\leq\frac{N(N-2)}{4}$ was established
in (\cite{MS}). Since the existence of a nontrivial sign changing radial solution to \eqref{H} gives a solution to the above problem in a 
geodesic ball we conclude:

\begin{teor}\label{nonex} Let $p=\frac{N+2}{N-2}$, then
the equation \eqref{H} does not have a radial sign changing solution if  $\la\leq\frac{N(N-2)}{4}$ or $N=3$.
\end{teor}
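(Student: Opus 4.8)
The plan is to reduce the nonexistence of a radial sign-changing solution of \eqref{H} in the critical case to the nonexistence of a positive solution of the associated Dirichlet problem on a geodesic ball, exploiting the nodal structure of radial functions and the known Pohozaev-type obstruction from \cite{S} and \cite{MS}. First I would observe that a radial solution $u$ of \eqref{H} is a function of the hyperbolic distance from the origin, and by Theorem \ref{sup_th} we have $u(x)\to 0$ as $x\to\infty$; moreover $u$ is $C^2$. If $u$ is sign-changing, then being continuous and radial, it must have at least one nodal sphere: there is a radius $\rho>0$ (measured, say, in the Euclidean or equivalently the hyperbolic radial coordinate) such that $u(\rho)=0$ and $u$ does not vanish identically on one side of it. Concretely, consider the innermost nodal radius, i.e.\ the smallest $\rho_0$ with $u(\rho_0)=0$ and $u\not\equiv 0$ on $\{|x|<\rho_0\}$; on the geodesic ball $\Omega=\{|x|<\rho_0\}$ the function $u$ has a strict sign and satisfies the Dirichlet problem.

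The second step is to normalize the sign. Since $u$ solves \eqref{H} and the equation is odd in $u$ (the nonlinearity $|u|^{p-1}u$ is odd and $p=\frac{N+2}{N-2}$), replacing $u$ by $-u$ if necessary we may assume $u>0$ on the interior of $\Omega$. Thus $w:=u|_{\Omega}$ is a positive solution of
\begin{equation*}
-\De_{\Bn} w-\la w = w^{\frac{N+2}{N-2}}\ \ \text{in}\ \ \Omega,\quad w>0 \ \ \text{in}\ \ \Omega,\quad w=0 \ \ \text{on}\ \ \partial\Omega,
\end{equation*}
where $\Omega$ is a geodesic ball centered at the origin. The key geometric point, which I would justify using the conformal/isometric structure recorded in Section 2, is that a geodesic ball in $\Bn$ centered at $0$ coincides with a Euclidean ball centered at $0$ (the hyperbolic and Euclidean spheres about the origin agree), hence $\Omega$ is a bounded, smooth, star-shaped domain in $\Bn$. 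This is exactly the hypothesis needed to invoke the cited nonexistence results.

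The third step is to apply the Pohozaev-type nonexistence theorem. By the results of \cite{S}, the Dirichlet problem above has no positive solution on a bounded star-shaped domain $\Omega\subset\Bn$ when $N=3$ or $\la\le\frac{N(N-2)}{4}$; the same conclusion for positive solutions on $\Bn$ itself in this parameter range is in \cite{MS}. Since we have produced precisely such a positive solution $w$ on the geodesic ball $\Omega$, we reach a contradiction, and therefore \eqref{H} admits no radial sign-changing solution under the stated restrictions on $\la$ and $N$. The main obstacle I anticipate is the regularity and boundary-behavior bookkeeping required to legitimately restrict $u$ to $\Omega$ and obtain a bona fide $H^1_0(\Omega)$ Dirichlet solution: one must confirm that the nodal radius $\rho_0$ is attained (which follows from continuity of the radial profile and the decay $u\to0$ at infinity from Theorem \ref{sup_th}), that $u$ does not vanish to infinite order at $\rho_0$ (so that the two sides genuinely carry opposite signs, ensuring sign-changing forces a true nodal domain), and that the restriction lies in the correct energy space so that the star-shapedness/Pohozaev argument of \cite{S} applies verbatim. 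All of these are standard consequences of elliptic regularity and the strong maximum principle applied to the radial ODE, but they are the delicate part of making the reduction rigorous.
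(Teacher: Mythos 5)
Your proposal is correct and follows essentially the same route as the paper, which likewise reduces the statement to the nonexistence (via the Pohozaev-type result of Stapelkamp \cite{S}, cf.\ also \cite{MS}) of a positive solution of the Dirichlet problem on the innermost nodal domain, a geodesic ball centered at the origin and hence a star-shaped Euclidean ball. The paper states this reduction in two sentences without elaboration, so your additional bookkeeping (attainment of the first nodal radius, sign normalization, membership in the energy space) is a faithful expansion of the intended argument rather than a departure from it.
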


\begin{teor}{\label{CP}}Let $p=\frac{N+2}{N-2}$, then the Equation \eqref{H} has at least
two pairs of non-trivial radial solutions if 
 $N\geq 7$ and $\frac{N(N-2)}{4}<\la<(\frac{N-1}{2})^2$.
\end{teor}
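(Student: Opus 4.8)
The plan is to obtain the two pairs as critical points of the even functional $\Il$ restricted to the radial subspace $H^1_r(\Bn)$, using a symmetric (genus) minimax scheme together with the Palais--Smale analysis of Theorem \ref{PS} to recover compactness below a computable energy threshold. First I would record that since $\la<(\frac{N-1}{2})^2$, which is the bottom of the spectrum of $-\De_{\Bn}$ on $H^1(\Bn)$, the quadratic form $\int_{\Bn}[|\na_{\Bn}u|_{\Bn}^2-\la u^2]\,dV_{\Bn}$ is positive definite and defines an equivalent norm; hence $\Il$ has the mountain--pass geometry ($\Il\ge\alpha>0$ on a small sphere of the radial Nehari manifold $\mathcal N_r$, and $\Il\to-\infty$ along rays), and $\Il$ is even because $p=2^\ast-1$ makes the nonlinearity odd. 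On $\mathcal N_r$ I would define the genus minimax values $c_1\le c_2$ by $c_k=\inf_{A}\sup_{u\in A}\Il(u)$ over closed symmetric subsets $A\subset\mathcal N_r$ of Krasnoselskii genus $\ge k$; standard deformation theory then says that each $c_k$ lying strictly below the first level of noncompactness is a critical value, and that $c_1=c_2$ would already force infinitely many critical points at that level. Thus the problem reduces to (i) identifying the noncompactness threshold and (ii) pushing $c_1$ and $c_2$ below it.

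For (i) I would specialise Theorem \ref{PS} to radial sequences. A radial PS sequence cannot shed a bubble of type \eqref{B}: translating a fixed profile to the boundary breaks the rotational symmetry, so after radial symmetrisation such a bubble would have to appear simultaneously along an entire sphere, contradicting the finiteness $n_1,n_2<\infty$. Hence only Euclidean bubbles of type \eqref{C} survive, and radiality forces their concentration point to be the centre $x_0=0$. Since each such bubble carries energy $J(V_k)\ge\frac1N S^{N/2}$ (with $S$ the best Euclidean Sobolev constant, equality only for the Aubin--Talenti instanton) and the weak limit $u$ is a radial solution with $\Il(u)=\frac1N\|u\|^2\ge0$, the decomposition $d=\Il(u)+\sum_k J(V_k)$ shows that $(PS)_d$ holds along radial sequences whenever $d<\frac1N S^{N/2}$, and for sign--changing configurations up to the two--bubble level $\frac2N S^{N/2}$, the only obstruction in the window $(\frac1N S^{N/2},\frac2N S^{N/2})$ being a splitting into a nonzero radial solution plus one bubble at level $c_1+\frac1N S^{N/2}$.

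For (ii) I would produce the test families by passing, via the conformal change of Section 2, to the Euclidean problem \eqref{EE} on the ball, namely $-\De v-\tilde\la(\frac{2}{1-|x|^2})^2 v=|v|^{\frac{4}{N-2}}v$ with $\tilde\la=\la-\frac{N(N-2)}{4}>0$ in the present range, and use truncated Aubin--Talenti bubbles $U_\epsilon$ centred at the origin. The single--bubble computation, in which the positive potential $\tilde\la(\frac{2}{1-|x|^2})^2$ produces a gain of order $\epsilon^2$ that dominates the cutoff error for $N\ge5$, gives $\max_{t\ge0}\Il(tU_\epsilon)<\frac1N S^{N/2}$ and hence $c_1<\frac1N S^{N/2}$; this is the first (positive, ground--state) pair. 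For the second pair I would take a two--parameter odd family spanned by two oppositely signed bubbles $U_{\epsilon_1}$, $-U_{\epsilon_2}$ with widely separated scales $\epsilon_1\ll\epsilon_2$ (equivalently a bubble superposed on the ground state) and estimate $\sup\Il$ over it. The bubble--bubble interaction is of order $\epsilon^{(N-2)/2}$ while the favourable potential contribution is of order $\epsilon^2$; the requirement that the gain beat the interaction is exactly $\frac{N-2}{2}>2$, i.e. $N\ge7$, which is where the dimensional hypothesis enters. Carrying this out yields $c_2<\frac2N S^{N/2}$, and in fact $c_2<c_1+\frac1N S^{N/2}$, placing $c_2$ inside the compact window; the associated critical point is sign--changing because its energy exceeds that of a single bubble.

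Finally I would assemble the conclusion: $c_1$ and $c_2$ are critical values of $\Il|_{H^1_r(\Bn)}$ lying below the relevant noncompactness thresholds, so each is attained; if $c_1<c_2$ these give two distinct pairs $\pm u_1,\pm u_2$, while if $c_1=c_2$ the genus inequality yields infinitely many critical points at that level, so in either case \eqref{H} has at least two pairs of nontrivial radial solutions. I expect the genuine difficulty to be step (ii): the precise asymptotic expansion of $\Il$ on the two--bubble family, controlling the interaction against the potential gain uniformly for $\frac{N(N-2)}{4}<\la<(\frac{N-1}{2})^2$, together with the bookkeeping in (i) that rules out the intermediate splitting level so that $c_2$ is genuinely compact.
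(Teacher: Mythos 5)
Your overall architecture --- minimax in $H^1_r(\Bn)$ below a compactness threshold read off from Theorem \ref{PS}, truncated Aubin--Talenti test functions, and the competition between the $\var^2$ potential gain and the $\var^{(N-2)/2}$ interaction forcing $N\ge 7$ --- is the same as the paper's, and your refined threshold $c_2<c_1+\frac1N S^{N/2}=\frac1N S_\la^{N/2}+\frac1N S^{N/2}$ is exactly the estimate the paper proves for its level $\beta$ (your cruder bound $\frac2N S^{N/2}$ is not the binding one, since $S_\la<S$). But the paper does not run a genus scheme: the first pair is simply $\pm u_0$ with $u_0$ the unique positive radial solution already known from \cite{MS}, and the second pair comes from the Cerami--Solimini--Struwe construction, i.e.\ minimizing $I_\la$ over $\mathcal{N}_1=\{u\in H^1_r(\Bn): f_{\la}(u^{+})=f_{\la}(u^{-})=1\}$ and extracting a Palais--Smale sequence confined to $U=\{|f_{\la}(u^{\pm})-1|<\frac12\}$, on which $\|u_n^{\pm}\|>\al>0$.

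The genuine gap in your version is the compactness bookkeeping at the second level, which you yourself flag but do not resolve. Even after profiles of type \eqref{B} are discarded (correct conclusion, though the clean reason is Theorem \ref{cpt}: a bounded radial sequence cannot lose $L^q$ mass at infinity for $2<q<2^*$, while a type \eqref{B} profile carries a fixed positive $L^q$ mass to infinity), the decomposition of Theorem \ref{PS} at a level $d<\frac1N S_\la^{N/2}+\frac1N S^{N/2}$ still admits the scenario $u=0$, $n_2=1$ with a single sign--definite Euclidean bubble, i.e.\ $d=\frac1N S^{N/2}$; since $S_\la<S$ this value lies strictly inside your window. A genus--two minimax gives no control on the sign of the almost--critical points, so you cannot exclude $c_2=\frac1N S^{N/2}$, and at that level the deformation argument fails; nor does ``energy above $c_1$'' by itself show that the critical point you obtain is sign--changing rather than an isometric copy of the ground state. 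This is precisely what the constraint $u_n\in U$ buys in the paper: a Palais--Smale sequence whose positive and negative parts are both bounded away from zero cannot collapse onto one sign--definite bubble, so the only alternative left below $\frac1N S_\la^{N/2}+\frac1N S^{N/2}$ is strong convergence to a nontrivial radial limit $u$ with $I_\la(u)=\beta\ge\frac2N S_\la^{N/2}>I_\la(u_0)$, hence sign--changing. To keep a purely genus--theoretic route you would need an extra argument pinning $c_2$ away from $\frac1N S^{N/2}$, or you should replace the genus class by one that preserves the two--sign structure, which is exactly what $\mathcal{N}_1$ and the class $\varSigma$ do.
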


Solutions of \eqref{H} is in one to one correspondence with the critical points of the 
functional
\begin{displaymath}
I(\la)(u)=\frac{1}{2}\Ibn(|\na_{\Bn}u|^2-\la u^2)dV_{\Bn}-\frac{1}{p+1}\int_{\Bn}|u|^{p+1} dV_{\Bn}, u \in H^1(\Bn).
\end{displaymath}
From the Poincar\'{e}-Sobolev inequality we know that $J$ is well defined and $C^1$ in $H^1(\Bn)
$. The main difficulty in finding critical points of $J$ is due to the lack of compactness, which we have already analysed in the last section.\\\\
{\it {\bf Proof of Theorem \ref{K}.}} Thanks to Theorem $J:H^1_r(\Bn)\rightarrow \R $ satisfies the Palais-Smale condition and hence using the standard arguments using genus as in  Ambrosetti-Rabinowitz ( \cite{AR} Theorems 3.13 , 3.14 ) we get a sequnce $u_k,k=1,2,...$ of critical points for $J|_{H^1_r(\Bn) }$ with $||u_k||\rightarrow \infty $. Also we know that the critical points of $J|_{H^1_r(\Bn) }$ are critical points of $J$ in $H^1(\Bn)
$ (see \cite{P}) as well. this proves the theorem.\\

{\bf Proof of Theorem \ref{CP}.} We know from \cite{MS} that \eqref{H} has a unique positive raidal solution say $u_0$. In order to prove the existence of a sign changing solution we proceed as in \cite{CSS} (see \cite{V} for the same kind of result on compact Riemannian manifolds).\\
First recall that the unique positive radial solution $u_0$ satisfies
$$ \frac{1}{N}S_{\la}^\frac{N}{2}=\Il(u_0) = \inf\limits_{u\in N} \Il(u)$$
where $\mathcal{N}$ is the Nehari manifold
$$\mathcal{N}= \left\{u \in H^1(\Bn)\setminus \{0\} \;:\; \Ibn(|\na_{\Bn}u|^2-\la u^2)dV_{\Bn} = \int_{\Bn}|u|^{p+1} dV_{\Bn} \right\} $$
Next observe that if $u$ is a sign changing solution then $u^\pm \in \mathcal{N}.$ Thus to look for a sign 
changing radial solution we need to look at only the $H_r^1(\Bn)$ function whose positive and 
negative parts are in $\mathcal{N}$. More precisely let us define for $u \in H_r^1(\Bn)\setminus \{0\}$
 $$f_{\la}(u)=\frac{\int_{\Bn}|u|^{2^*} dV_{\Bn}}{\Ibn(|\na_{\Bn}u|_{\Bn}^2-\la u^2)dV_{\Bn}}$$
and $f_{\la}(0)=0$. Let $\mathcal{N}_1$ and $U$ be defines as
 $$\mathcal{N}_1=\{u\in H_r^1(\Bn):f_{\la}(u^{+})=f_{\la}(u^{-})=1\}$$ 
$$U=\{u\in H_r^1(\Bn):|f_{\la}(u^{\pm})-1|<\frac{1}{2}\}.$$
We can easily check that $U\not=\emptyset$ and the Poincare Sobolev inequality tells us that there exists $\al>0$ such that $u\in U\Rightarrow||u^{\pm}||>\al$.\\
{\it Claim: Let $\beta = \inf\limits_{u\in \mathcal{N}_1} I_{\la}(u) $ then there exists a PS sequence $\{u_n\} $ of $I_\la$ at the level $\beta$ such that $u_n \in U$ for all $n$. Moreover $\beta$ satisfies the estimate $$\beta <\frac{S_\la^{\frac{N}{2}}}{N} + \frac{S^{\frac{N}{2}}}{N} .$$ }

Assuming the claim, let us observe from Theorem (\ref{PS}) that the PS sequence otained in the
above claim must be of the form $u_n= u + o(1)$ where $u$ is a nontrivial solution of \eqref{H}. Since $I_\la(u)=\beta$ we immediately see that $u$ changes sign and hence the theorem follows. Now it remains to prove the claim.\\
{\it Proof of claim:}  Existence of the PS sequence at level $\beta$ follows exactly as in 
\cite{CSS}. We will just outline the arguments and refer to \cite{CSS} and the references 
therein for details.\\
Let us define $P$ to be the cone of non negative functions in $H^1_r(\Bn)$ and $\varSigma$ be the collection of maps $\si\in C(Q,H^1_r(\Bn))$ where $Q=[0, 1]\times [0, 1]$, satisfying
$$\si(s, 0)=0,\;
\si(0, t)\in P,\;\si(1, t)\in -P,\;(\Il\circ\si)(s, 1)\leq 0,\;f_{\la}\circ\si(s, 1)\geq 2$$
for all $s, t \in [0, 1]$. 
The very same arguments used in \cite{CSS} tells us that 
$$\beta =\displaystyle\inf_{\si\in\varSigma} \sup_{u\in\si(Q)}\Il(u).$$
If $\beta$ is not a critical level then we can use a variant of the standard deformation lemma to conclude that the above min max level can be further lowered leading to a contradiction (See \cite{CSS} for details).
 Thus the crucial step to prove is the estimate on $\beta.$
Let $\phi \in C_c^\infty(\Bn)$ such that $0\le \phi \le 1 $ and $\phi =1$ on $|x|<r$ where $0<r<1.$ Define $v_\var $ as
$$ v_\var (x) \;=\; \phi(x) \left(\frac{\var}{\var^2+|x|^2}\right)^{\frac{N-2}{2}}$$
Define 
$$ u_\var (x) \;=\; (\frac{1-|x|^2}{2})^{\frac{N-2}{2}}v_\var (x)$$
Let $u_0$ be the unique positive solution of \eqref{H} then for suitable $a,b \in \R,\;t[(1-s)au_0 +bsu_\var] \in \varSigma$. Thus the estimate on $\beta$ follows once we show that 
$$\sup\limits_{a,b \in \R}\Il(au_0 +bu_\var) <\frac 1N \left(S_\la^{\frac{N}{2}} + S^{\frac{N}{2}}\right) .$$
Making a conformal change enough to show that 
$$\sup\limits_{a,b \in \R}J_{\la}(av_0 +bv_\var) <\frac 1N \left(S_\la^{\frac{N}{2}} + S^{\frac{N}{2}}\right) .$$
where $J_{\la}$ is as in (\ref{JL}) and $v_0 (x)= (\frac{2}{1-|x|^2})^{\frac{N-2}{2}}u_0 (x).$\\
Before proceeding to prove this we need to calculate few estimates. \\
Setting, $\var^2=\mu$ we find $v_{\var}=\mu^{\frac{N-2}{4}}\frac{\va(x)}{(\mu+|x|^2)^\frac{N-2}{2}}=: \mu^{\frac{N-2}{4}}w_{\mu}$\\
Now let us recall some results from \cite{BN} 
\begin{itemize}
\item[(i)] $|\na w_{\mu}|_{L^2(\Bn)}^2=\frac{S^{\frac{N}{2}}}{\mu^{\frac{N-2}{2}}}+O(1)$ 
\item[(ii)] $|w_{\mu}|_{L^2(\Bn)}^{2}=\frac{C_1}{\mu^{\frac{N-4}{2}}}+O(1)$
\item[(iii)] $|w_{\mu}|_{L^{2^{*}}(\Bn)}^{2^{*}}=\frac{S^\frac{N}{2}}{\mu^\frac{N}{2}}+O(1)$
\end{itemize}
Now using (i), (ii), (iii) and the fact that $v_{\var}$ has support in $B(0,R)$ where $R<1$ we can
compute the following estimates
\begin{enumerate}
\item $|\na v_{\var}|_{L^2(\Bn)}^2=S^\frac{N}{2}+O(\mu^\frac{N-2}{2})$
\item $|(\frac{2}{1-|x|^2})v_{\var}|_{L^2{\Bn}}^2=C_2\mu+O(\mu^\frac{N-2}{2})$
\item $|v_{\var}|_{L^{2^{*}}(\Bn)}^{2^{*}}=S^\frac{N}{2}+O(\mu^{\frac{N}{2}})$
\item $|v_{\var}|_{L^1(\Bn)}\leq C_3 \mu^\frac{N-2}{4}$
\item $|v_{\var}|_{L^{2^{*}-1}(\Bn)}^{2^{*}-1}\leq C_4 \mu^\frac{N-2}{4}$
\end{enumerate}
For proof of (4) and (5) see appendix. Taking $b=0, a=1$ we can see that $\displaystyle\sup_{a,b \in \R}J_{\la}(av_0 +bv_\var)>0$
and $J_{\la}\big(t(av_0 +bv_\var)\big)<0$ while $|t|\to\infty$ and $a, b$ to be fixed. Therefore
it is enough to consider $\displaystyle\sup_{|a|,|b|<K}J_{\la}(av_0 +bv_\var)$ where $K$ is chosen sufficiently large.
Therefore using the above estimates and $I_{\la}(u_0)=J_\la(v_0)$ we get\\
\begin{eqnarray*}
J_{\la}(av_0 +bv_\var)&=&\frac{1}{2}\Ibn\big[|\na(av_0+bv_\var)|^2-\tilde\la(\frac{2}{1-|x|^2})^2(av_0+bv_{\var})^2\big]dx
\\
&-&\frac{1}{2^*}\Ibn\big[|av_0+bv_\var|^{2^*}dx
\\
&\leq&\frac{a^2}{2}\Ibn(|\na v_0|^2-\tilde\la(\frac{2}{1-|x|^2})^2 v_0^2)dx
\\
&+&\frac{b^2}{2}\Ibn(|\na v_{\var}|^2-\tilde\la(\frac{2}{1-|x|^2})^2 v_{\var}^2)dx
\\
&+&\Ibn\big[\na(a v_0)\cdot \na(bv_{\var})-\tilde\la(\frac{2}{1-|x|^2})^2 av_0 bv_{\var}\big]dx
\\
&-&\frac{a^{2^*}}{2^*}\Ibn|v_0|^{2^*}dx-\frac{b^{2^*}}{2^*}\Ibn|v_{\var}|^{2^*}dx
\\
&+&K_1\big[|bv_{\var}|_{L^{2^*-1}(\Bn)}^{2^*-1}|av_0|_{L^{\infty}}+|bv_{\var}|_{L^1}|av_0|_{L^{\infty}}^{2^*-1}\big]
\\
&\leq&\frac{1}{N}(S_{\la}^\frac{N}{2}+S^\frac{N}{2})-C_5\mu
\\
&+&K_2\big[|bv_{\var}|_{L^1}|\De(av_0)|_{L^{\infty}}+|\tilde\la(\frac{2}{1-|x|^2})^2 bv_{\var}|_{L^1}|av_0|_{L^{\infty}} \big]
\\
&+&K_1\big[|bv_{\var}|_{L^{2^*-1}(\Bn)}^{2^*-1}|av_0|_{L^{\infty}}+|bv_{\var}|_{L^1}|av_0|_{L^{\infty}}^{2^*-1}\big]
\\
&\leq&\frac{1}{N}(S_{\la}^\frac{N}{2}+S^\frac{N}{2})-C_5\mu+K_3|v_0|_{L^{\infty}(B(0,R))}\mu^{\frac{N-2}{4}}
\end{eqnarray*}
Now taking $\var$ to be small enough we can conclude $J_{\la}(av_0 +bv_\var)<\frac{1}{N}(S_{\la}^\frac{N}{2}+S^\frac{N}{2})$
since we have $N\geq 7$ and $\mu=\var^2$.

\section{Appendix} 

In this appendix we will recall a few facts about the Hyerbolic space, especially the disc model. For proofs of theorems and a detailed discussion we refer to \cite{JR}.\\
{\bf Disc and Upper half space model.} We have already introduced the Disc model.
The half space model is given by $(\H^N,g)$ where $\H^N = \{(x_1,...x_N) \in \R^N : x_N>0 \}$
and $g_{ij}= \frac{1}{x_N^2}\delta_{ij}$.\\
The map $M :\H^n \rightarrow \Bn$ given by
\begin{equation}\label{H-B}
 M(x) := \left(\frac {1-r^2 - |z|^2}{(1+r)^2 + |z|^2} , \frac {2 z}{(1+r)^2 + |z|^2}\right) 
\end{equation}
where $x\in \H^N$ is denoted as $(z, r)\in \R^{N-1}\times (0,\infty),$ is an Isometry between $\H^N$ and $\Bn$ with $M^{-1}=M.$\\
{\bf The Hyperbolic distance in $\Bn$.}
The Hyperbolic distance between $x,y \in\Bn$ is given by 
\begin{displaymath}
 d_{\Bn}(x,y)= cosh^{-1}\big(1+\frac{2|x-y|^2}{(1-|x|^2)(1-|y|^2)}\big)
\end{displaymath}
We define the hyperbolic sphere of $\Bn$ with center $b$ and radius $r>0$, as the set 
\begin{displaymath}
 S_{\Bn}(b,r)= \{x\in\Bn: d_{\Bn}(b,x)=1 \}
\end{displaymath}
It easily follows that
a subset $S$ of $\Bn$ is a hyperbolic sphere of $\Bn$ iff $S$ is a Euclidean sphere of $\R^N$ and contained in $\Bn$ probably with a different center and different radius.\\
{\bf Isometry group of $\Bn$.}
Let $a$ be the unit vector in $\Rn$ and $t$ be a real number. Let $P(a,t)$ be the hyperplane $P(a,t)=\{x\in\Rn: x.a=t\}$.The reflection $\rho$ of $\Rn$ in the hyperplane $P(a,t)$ 
 is defined by the formula $\rho(x)=x+2(t-x.a)a$.\\
Now let $b\in\Rn$ and $r$ is positive real number, then the reflection $\sigma$ of $\Rn$ in a sphere $S(b,r)=\{x\in\Rn: |x-b|=r\}$ is defined by the formula $\sigma(x)=b+(\frac{r}{|x-b|})^2(x-b)$.\\
Let us denote the extended Euclidean space by, $\hat{\Rn}:=\Rn\cup\infty$.\\
\begin{defn}
 A sphere $\Sigma$ of $\hat{\Rn}$ is defined to be either a Euclidean sphere $S(a,r)$ or an extended plane $\hat P(a,t)=P(a,t)\cup\{\infty\}$.
\end{defn}
\begin{lemma}{\label{ortho}}
Two spheres of $\hat{\Rn}$ are orthogonal under the following conditions:
\begin{itemize}
\item The spheres $\hat P(a,r)$ and $\hat P(b,s)$ are orthogonal iff $a$ and $b$ are orthogonal.
\item The spheres $S(a,r)$ and $\hat P(b,s)$ are orthogonal iff $a$ is in $\hat P(b,s)$.
\item The spheres $S(a,r)$ and $S(a,r)$ are orthogonal iff $|a-b|^2 = r^2+s^2$.
\end{itemize}
\end{lemma}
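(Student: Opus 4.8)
The plan is to reduce orthogonality of two members of $\hat{\Rn}$ to an elementary linear-algebra condition on normal vectors at a point of intersection, and then to check in each of the three cases that this condition is independent of the chosen intersection point, so that a clean ``iff'' results. Recall that two smooth hypersurfaces meeting at a point $x$ are orthogonal at $x$ precisely when their normal lines at $x$ are orthogonal. A plane $P(a,t)=\{x:x\cdot a=t\}$ (with $a$ a unit vector) has the constant normal $a$, while a sphere $S(a,r)$ has outward normal along $x-a$ at a point $x$ on it. I would take these as the normals throughout; the adjoined point $\infty$ plays no role, since orthogonality is tested only at finite intersection points.

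For the first bullet the planes have normals $a$ and $b$, so orthogonality is immediately equivalent to $a\cdot b=0$; when this holds the planes are non-parallel and genuinely intersect, so the condition is non-vacuous. For the second bullet, at a point $x\in S(a,r)\cap \hat P(b,s)$ the sphere's normal is $x-a$ and the plane's normal is $b$, so orthogonality means $(x-a)\cdot b=0$. Since $x\cdot b=s$ for every $x$ in the plane, this reads $a\cdot b=s$, i.e. $a\in \hat P(b,s)$; conversely, if the center $a$ lies on the plane then $(x-a)\cdot b=x\cdot b-a\cdot b=s-s=0$ at every intersection point, giving orthogonality. Thus a plane is orthogonal to a sphere exactly when it passes through the sphere's center.

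For the third bullet, at $x\in S(a,r)\cap S(b,s)$ the two normals are $x-a$ and $x-b$, and the polarization identity $|a-b|^2=|x-a|^2-2(x-a)\cdot(x-b)+|x-b|^2$, combined with $|x-a|^2=r^2$ and $|x-b|^2=s^2$, gives $2(x-a)\cdot(x-b)=r^2+s^2-|a-b|^2$. Hence $(x-a)\cdot(x-b)=0$ at some (equivalently every) intersection point precisely when $|a-b|^2=r^2+s^2$, which is the asserted condition (with the evident correction that the second sphere is $S(b,s)$).

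The only point to watch, rather than a genuine obstacle, is to confirm that each orthogonality condition forces the two surfaces to actually meet, so the equivalences are not vacuous. In the sphere-sphere case, $|a-b|^2=r^2+s^2$ yields $|r-s|\le|a-b|\le r+s$, whence the spheres intersect transversally; the plane cases are handled by the analogous elementary inequalities. I expect the whole argument to be short and computational once the normal-line characterization of orthogonality is fixed at the outset.
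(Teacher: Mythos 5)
Your proposal is correct, and it is worth noting that the paper itself contains no proof of this lemma: it simply defers to Ratcliffe \cite{JR}, Theorem 4.4.2. What you have written is a complete, self-contained replacement for that citation, and it is essentially the standard argument that the cited source formalizes: reduce orthogonality at an intersection point to orthogonality of the normal lines, with normal $b$ for $P(b,s)$ and normal $x-a$ at $x\in S(a,r)$. Your three computations are all sound, including the point-independence in the sphere--plane case (where $(x-a)\cdot b = s - a\cdot b$ depends only on the data, not on $x$) and the polarization identity $|a-b|^2 = r^2+s^2-2(x-a)\cdot(x-b)$ in the sphere--sphere case. Two details you handled that are easy to overlook: first, the non-vacuousness checks, where your inequality $|r-s|\le |a-b|\le r+s$ is in fact strict (since $r,s>0$, $|a-b|^2=r^2+s^2$ forces $(r-s)^2<|a-b|^2<(r+s)^2$), so the spheres genuinely meet transversally, and similarly $a\cdot b = s$ puts the center on the plane, guaranteeing intersection; second, you correctly flagged the typo in the statement's third bullet, where the second sphere should read $S(b,s)$. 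One point you dispatch quickly but defensibly is the role of $\infty$: for two extended planes, the only configurations meeting solely at $\infty$ are parallel planes, which have $a=\pm b$ and hence are never orthogonal, so testing only at finite intersection points loses nothing. In short, the proposal fills a gap the paper outsources, by the same route as the reference.
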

For a proof see \cite{JR}, Theorem 4.4.2 .\\
With these definitions we have the following characterisation of the isometry group of $\Bn$.
Again we refer to \cite{JR} for a proof.\\
\begin{teor}{\label{mob}}
Every element of $I(\Bn)$ is a finite composition of reflections of $\hat{\Rn}$ in spheres orthogonal to $S^{N-1}$.
\end{teor}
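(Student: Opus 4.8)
The plan is to show that the subgroup $\Phi \le I(\Bn)$ generated by reflections of $\hat{\Rn}$ in spheres orthogonal to $S^{N-1}$ is in fact all of $I(\Bn)$. One inclusion is immediate: if $\Sigma$ is a sphere orthogonal to $S^{N-1}$, then the associated reflection $\sigma$ maps $S^{N-1}$ onto itself, hence preserves $\Bn$, and being a M\"{o}bius transformation it is conformal; a direct computation (equivalently, the conformal invariance of the hyperbolic metric, see \cite{JR}) shows $\sigma$ preserves the metric $g$, so $\sigma \in I(\Bn)$ and therefore $\Phi \subseteq I(\Bn)$. The substance of the theorem is the reverse inclusion, which I would obtain by the classical transitivity-plus-stabiliser argument.

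First I would record the two structural facts on which everything rests. (a) Every element of $I(\Bn)$ is the restriction to $\Bn$ of a M\"{o}bius transformation of $\hat{\Rn}$ that leaves $\Bn$ invariant: an isometry of $(\Bn,g)$ is in particular a conformal self-diffeomorphism, so by Liouville's rigidity theorem (for $N \ge 3$) it extends to a M\"{o}bius transformation of $\hat{\Rn}$, and it must preserve the ideal boundary $S^{N-1}$, hence $\Bn$ itself. (b) The full M\"{o}bius group $M(\hat{\Rn})$ is generated by reflections in spheres of $\hat{\Rn}$, the defining description used in \cite{JR}. Fact (b) alone is insufficient, since the reflections appearing in a decomposition of a given $\phi \in I(\Bn)$ need not individually preserve $\Bn$; this is precisely why the transitivity argument is needed.

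Next I would prove that $\Phi$ acts transitively on $\Bn$. Transferring to the half-space model $\Hn$ via the isometry $M$ of \eqref{H-B}, the Euclidean translations parallel to $\R^{N-1}\times\{0\}$ and the dilations centred on that hyperplane are compositions of reflections in spheres and hyperplanes orthogonal to $\R^{N-1}\times\{0\}$, and $M$ carries these to reflections in spheres orthogonal to $S^{N-1}$; since they already act transitively on $\Hn$, the group $\Phi$ acts transitively on $\Bn$. (Concretely, for $p \in \Bn\setminus\{0\}$ one exhibits a single sphere $\Sigma_p$ orthogonal to $S^{N-1}$ whose reflection sends $0$ to $p$, which is the hyperbolic translation already used in Section 3.) Consequently, given $\phi \in I(\Bn)$, I choose $g \in \Phi$ with $g(0)=\phi(0)$ and set $\psi := g^{-1}\circ \phi \in I(\Bn)$, an isometry fixing the origin. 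It then remains to treat the stabiliser of $0$: analysing the differential of $\psi$ at $0$ (a conformal linear map, that is, a positive scalar times an orthogonal map) and using that $\psi$ preserves $S^{N-1}$ forces the scalar to be $1$, so $\psi$ is the restriction of some $A \in O(N)$ acting linearly. By the Cartan--Dieudonn\'{e} theorem, $A$ is a composition of at most $N$ reflections in hyperplanes through the origin; by the second item of Lemma \ref{ortho} (applied with the sphere $S(0,1)=S^{N-1}$, which contains $0$), each such hyperplane is orthogonal to $S^{N-1}$, so $\psi \in \Phi$. Hence $\phi = g\circ\psi \in \Phi$, proving $I(\Bn)\subseteq \Phi$ and finishing the proof.

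The main obstacle, and the step I would treat most carefully, is the stabiliser computation together with structural fact (a): identifying $I(\Bn)$ with the M\"{o}bius transformations preserving $\Bn$ and showing that fixing $0$ pins an isometry down to $O(N)$. This is where Liouville's rigidity of conformal maps (valid for $N\ge 3$) enters; for $N=2$ the same conclusion holds but must be argued within the M\"{o}bius framework directly rather than through Liouville, and the elementary verification in the first paragraph that reflections in orthogonal spheres are genuine hyperbolic isometries is a short but essential computation.
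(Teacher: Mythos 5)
The paper does not actually prove this theorem: it is stated in the appendix as background and the text says explicitly ``Again we refer to \cite{JR} for a proof,'' so there is no in-paper argument to compare against. Your proposal supplies the standard transitivity-plus-stabiliser proof, and in outline it is correct; the transitivity step is even simpler than your half-space detour suggests, since the paper's appendix already constructs, for every $b\in\Bn$, the hyperbolic translation $\tau_b=\sigma_{b^*}\circ\rho_{b^*}$ with $\tau_b(0)=b$ as a composition of exactly two reflections in spheres of $\hat{\Rn}$ orthogonal to $S^{N-1}$ (the sphere $S(b^*,r)$ with $r^2=|b^*|^2-1$, and the hyperplane $b\cdot x=0$ through the origin, orthogonal to $S^{N-1}$ by Lemma \ref{ortho}).

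Two steps in your sketch are compressed enough to need repair. First, in the stabiliser computation, knowing $d\psi_0\in O(N)$ does not by itself make $\psi$ linear: there are M\"{o}bius transformations fixing $0$ with derivative the identity there that are not the identity, e.g. $x\mapsto (x+|x|^2v)\big/\left(1+2\langle x,v\rangle+|v|^2|x|^2\right)$. You must either invoke rigidity of Riemannian isometries --- $\psi$ and the linear map $A=d\psi_0$ are isometries of the connected manifold $\Bn$ agreeing to first order at $0$, hence equal --- or argue via inverse points: a M\"{o}bius transformation preserving $S^{N-1}$ and fixing $0$ must fix $\infty$ (the inverse point of $0$ with respect to $S^{N-1}$), hence is a Euclidean similarity, hence lies in $O(N)$. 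Second, your fact (a) rests on Liouville's theorem, which requires $N\ge 3$ and a priori smoothness; if $I(\Bn)$ means metric isometries one also needs Myers--Steenrod before Liouville applies. The route in \cite{JR} avoids both issues: using the explicit hyperbolic distance formula one shows directly that an isometry fixing $0$ preserves Euclidean norms and inner products, hence is orthogonal, uniformly in all dimensions including $N=2$. With either repair your argument is complete and is essentially the proof in \cite{JR}.
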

{\bf Hyperbolic Translation.}
Let $S(a,r)$ be a sphere in $\Rn$ with $r^2= |a|^2-1$, therefore $S(a,r)$ is orthogonal to $S^{N-1}$. Let $\sigma_a$ be the reflection of $\Rn$ in $S(a,r)$ and $\rho_a$ is the reflection of $\Rn$ in the hyperplane $a.x=0$.
Using Lemma \ref{ortho} and Theorem\ref{mob} we get $\sigma_a\circ\rho_a$ is an isometry of $\Bn$ . Define $a^*=\frac{a}{|a|^2}$. Then we get
\begin{displaymath}
 \sigma_a\circ\rho_a(x)=\frac{(|a|^2-1)x+(|x|^2+2x\cdot a^*+1)a}{|x+a|^2}
\end{displaymath}
In particular, $\sigma_a\circ\rho_a(0)=a^*$.\\
Let $b$ be any nonzero point in $\Bn$, and $b^*=a(say)$. Then $|a|>1$ and $a^*=b$. Now if we take $r=(|a|^2-1)^\frac{1}{2}$, then $S(a,r)$ is 
orthogonal to $S^{N-1}$ by Lemma \ref{ortho}. Therefore we can define a M\"{o}bius transformation of $\Bn$ by the formula
\begin{displaymath}
 \tau_b=\sigma_{b^*}\circ\rho_{b^*}.
\end{displaymath}
i.e. \begin{displaymath}
      \tau_b(x)=\frac{(|b^*|^2-1)x+(|x|^2+2x\cdot b+1)b^*}{|x+b^*|^2}
     \end{displaymath}
Therefore in terms of $b$ we can write
\begin{displaymath}
 \tau_b(x)=\frac{(1-|b|^2)x+(|x|^2+2x\cdot b+1)b}{|b|^2|x|^2+2x\cdot b+1}
\end{displaymath}
As $\tau_b$ is the composition of two reflections in hyperplanes orthogonal to the line $(-\frac{b}{|b|}, \frac{b}{|b|})$, the 
transformation $\tau_b$ acts as a translation  along this line. We define $\tau_0$ to be the identity. 
Then $\tau_b(0)=b$ for all $b\in\Bn$. The map $\tau_b$ is called the hyperbolic translation of $\Bn$ by $b$.

\begin{lemma}
(i) $|v_{\var}|_{L^1(\Bn)}\leq C_3\mu^\frac{N-2}{4}$
(ii) $|v_{\var}|_{L^{2^{*}-1}(\Bn)}^{2^{*}-1}\leq C_4\mu^\frac{N-2}{4}$
\end{lemma}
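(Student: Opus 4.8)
The plan is to exploit the explicit scaling structure of $v_\var$. Writing $\mu=\var^2$ as in the text, we have $v_\var=\mu^{\frac{N-2}{4}}w_\mu$ with $w_\mu(x)=\phi(x)(\mu+|x|^2)^{-\frac{N-2}{2}}$, where all the $L^p$ norms are taken against Euclidean measure (consistently with their use in $J_\la$). Since $\phi$ is supported in $B(0,R)$ with $0\le\phi\le 1$ and $R<1$, both integrals reduce to integrals of a pure power of $(\mu+|x|^2)$ over $B(0,R)$, and the whole argument becomes a matter of extracting the correct power of $\mu$ from these integrals by the standard rescaling $x=\sqrt\mu\,t$.

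For (i), pulling out the prefactor gives $|v_\var|_{L^1(\Bn)}\le \mu^{\frac{N-2}{4}}\int_{B(0,R)}(\mu+|x|^2)^{-\frac{N-2}{2}}\,dx$. Here I would simply observe that the remaining integral stays bounded as $\mu\to 0$: the integrand is dominated by $|x|^{-(N-2)}$, which is integrable near the origin since $N-2<N$ (in polar coordinates $\int_0^R s^{-(N-2)}s^{N-1}\,ds=\int_0^R s\,ds<\infty$). Hence $\int_{B(0,R)}(\mu+|x|^2)^{-\frac{N-2}{2}}\,dx=O(1)$, and (i) follows with exponent $\frac{N-2}{4}$.

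For (ii), note that $2^\ast-1=\frac{N+2}{N-2}$, so raising $v_\var$ to this power produces a prefactor $\mu^{\frac{N-2}{4}\cdot\frac{N+2}{N-2}}=\mu^{\frac{N+2}{4}}$ and a denominator exponent $\frac{N-2}{2}\cdot\frac{N+2}{N-2}=\frac{N+2}{2}$, giving $|v_\var|_{L^{2^\ast-1}(\Bn)}^{2^\ast-1}\le\mu^{\frac{N+2}{4}}\int_{B(0,R)}(\mu+|x|^2)^{-\frac{N+2}{2}}\,dx$. This time the integrand behaves like $|x|^{-(N+2)}$ near the origin and is no longer integrable, so the integral diverges as $\mu\to 0$; I would capture its rate through the substitution $x=\sqrt\mu\,t$. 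A direct count of powers yields $\int_{B(0,R)}(\mu+|x|^2)^{-\frac{N+2}{2}}\,dx=\mu^{-1}\,\omega_{N-1}\int_0^{R/\sqrt\mu}\frac{t^{N-1}}{(1+t^2)^{\frac{N+2}{2}}}\,dt$, and the $t$-integral converges to a finite constant as $\mu\to 0$ because the integrand decays like $t^{-3}$ at infinity. Thus the integral is $O(\mu^{-1})$, and combining with the prefactor gives $\mu^{\frac{N+2}{4}}\cdot O(\mu^{-1})=O(\mu^{\frac{N-2}{4}})$, which is (ii).

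There is no serious obstacle here; the only point requiring care is the bookkeeping of exponents in (ii)—ensuring the rescaling produces exactly $\mu^{-1}$ and that the limiting integral $\int_0^\infty t^{N-1}(1+t^2)^{-\frac{N+2}{2}}\,dt$ is finite, which holds for every $N\ge 3$. Both estimates then land on the same power $\mu^{\frac{N-2}{4}}$, matching the stated claim.
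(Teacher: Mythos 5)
Your proof is correct and follows essentially the same route as the paper: factor out the prefactor $\mu^{\frac{N-2}{4}}$ and estimate $\int_{B(0,R)}(\mu+|x|^2)^{-\alpha}\,dx$ directly, obtaining $O(1)$ for $\alpha=\frac{N-2}{2}$ and $O(\mu^{-1})$ for $\alpha=\frac{N+2}{2}$ via the rescaling $x=\sqrt{\mu}\,t$. The only cosmetic difference is that the paper splits off the cutoff by writing $\phi=(\phi-1)+1$ with $\phi-1$ vanishing near the origin, whereas you simply bound $\phi\le 1$ on its support; your exponent bookkeeping is correct and lands on the same power $\mu^{\frac{N-2}{4}}$ in both cases.
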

\begin{proof}
(i) $|v_\var|_{L^1}=\mu^\frac{N-2}{4}|w_\mu|_{L^1}$. now using $\va-1\equiv0$ near $0$ we get
\begin{eqnarray*}
\Ibn|w_{\mu}|&=&\Ibn\frac{\va(x)-1}{(\mu+|x|^2)^\frac{N-2}{2}}dx+\Ibn\frac{dx}{(\mu+|x|^2)^\frac{N-2}{2}}  
\\
&=&O(1)+C\mu \  \ \mbox{where} \ \ C=\Ibn\frac{dx}{(1+|x|^2)^\frac{N-2}{2}}
\end{eqnarray*}
Hence $|v_\var|_{L^1}=O(\mu^\frac{N-2}{4})+C\mu^\frac{N+2}{4}\leq C_3\mu^\frac{N-2}{4}$

\spa

(ii)$|v_{\var}|_{L^{2^{*}-1}(\Bn)}^{2^{*}-1}=\mu^\frac{N+2}{4}|w_\mu|_{L^{2^{*}-1}}^{2^{*}-1}$. now
\begin{eqnarray*}
\Ibn|w_{\mu}|^{2^{*}-1}&=&\Ibn\frac{{\va(x)}^{2^{*}-1}-1}{(\mu+|x|^2)^\frac{N+2}{2}}dx+\Ibn\frac{dx}{(\mu+|x|^2)^\frac{N+2}{2}}  
\\
&=&O(1)+C\mu^{-1} \  \ \mbox{where} \ \ C=\Ibn\frac{dx}{(1+|x|^2)^\frac{N+2}{2}}
\end{eqnarray*}
Hence $|v_\var|_{L^{2^{*}-1}(\Bn)}^{2^{*}-1}=O(\mu^\frac{N+2}{4})+C\mu^\frac{N-2}{4}\leq C_4\mu^\frac{N-2}{4}$
\end{proof}

\end{document}